\newcommand{\R}{\mathbb{R}}
\newtheorem{lemma}{Lemma}
\DeclareMathOperator*{\argmin}{argmin}
\title{\LARGE \bf
Noise Aware Path Planning and\\ Power Management of Hybrid Fuel UAVs
}
\author{Drew Scott$^{1}$, Satyanarayana G. Manyam$^{2}$, Isaac E. Weintraub$^{3}$, David W. Casbeer$^{4}$, \\Manish Kumar$^{5}$   
\thanks{$^{1}$Department of Mechanical and Materials Engineering, University of Cincinnati 
        {\tt\small scott2dd@mail.uc.edu}}%
\thanks{$^{2}$Research Scientist, Infoscitex corp., a DCS company, Dayton OH, 45431
        {\tt\small msngupta@gmail.com}}%
\thanks{$^{3}$Electronics Engineer, Control Science Center, Air Force Research Laboratory, WPAFB, OH, 45433 {\tt\small isaac.weintraub.1@us.af.mil}}%
\thanks{$^{4}$Technical Area Lead, Cooperative \& Intelligent Control, Control Science
Center, Air Force Research Laboratory, WPAFB, OH, 45433
        {\tt\small david.casbeer@us.af.mil}}%
\thanks{$^{5}$Professor in Department of Mechanical and Materials Engineering, University of Cincinnati {\tt\small kumarmu@ucmail.uc.edu}}%
\thanks{This material is in part based on research sponsored by the Ohio Department of Higher Education and the Southwestern Council for Higher Education under Ohio House Bill 49 of the 132nd General Assembly. The U.S. Government is authorized to reproduce and distribute reprints for Governmental purposes notwithstanding any copyright notation thereon. The views and conclusions contained herein are those of the authors and should not be interpreted as necessarily representing the official policies or endorsements, either expressed or implied, of Southwestern Council for Higher Education, the Ohio Department of Higher Education or the U.S. Government.}
\thanks{APPROVED for public release: distribution unlimited, case number: AFRL-2023-5566.} 
}
\begin{document}
\maketitle
\thispagestyle{empty}
\pagestyle{empty}
\begin{abstract}
Hybrid fuel Unmanned Aerial Vehicles (UAV), through their combination of multiple energy sources, offer several advantages over the standard single fuel source configuration, the primary one being increased range and efficiency. Multiple power or fuel sources also allow the distinct pitfalls of each source to be mitigated while exploiting the advantages within the mission or path planning. We consider here a UAV equipped with a combustion engine-generator and battery pack as energy sources. We consider the path planning and power-management of this platform in a noise-aware manner. To solve the path planning problem, we first present the Mixed Integer Linear Program (MILP) formulation of the problem. We then present and analyze a label-correcting algorithm, for which a pseudo-polynomial running time is proven. Results of extensive numerical testing are presented which analyze the performance and scalability of the labeling algorithm for various graph structures, problem parameters, and search heuristics.  It is shown that the algorithm can solve instances on graphs as large as twenty thousand nodes in only a few seconds. 
\end{abstract}


\textit{Note to Practitioners:} \textbf{
The problem and algorithms proposed in this paper are relevant to constrained planning problems in general and specifically to those focused on widespread usage of small aerial vehicles in congested, urban environments. We are concerned here with the path planning of hybrid-fuel aerial vehicles in a noise-aware manner. This is motivated by the increasing usage of aerial vehicles, envisioning a probable future restriction on noise production in certain airspaces and the planning of such vehicles in those airspaces.  We explore this novel problem, and present an approach to quickly find the optimal path and power plan in the presence of the noise constraints.  The approach here is a discrete one, where the solution is a discrete set of edges and discrete generator settings which must be smoothed to obtain control inputs for a real system.  The discrete approach allows solutions to be found quickly while giving up the true optimal trajectory that can be found when considering from a continuous framework.  In practice, environment sampling and graph construction will greatly affect time-to-solve and as overall solution quality relative to a continuous approach to the trajectory and generator control.}

\begin{IEEEkeywords}
UAVs, Hybrid-fuel, Path planning, Discrete optimization, Noise-aware planning
\end{IEEEkeywords}

\section{Introduction}\label{sec:intro}
Hybrid fuel Unmanned Aerial Vehicles (UAVs) are those in which multiple fuel sources are used for energy storage and/or power~\cite{TOWNSEND2020e05285}. Generally, the primary motivation for hybridization is increased endurance and efficiency.  However, by intelligently switching the power modality throughout the mission, other less obvious advantages can be exploited. For example, a hybrid-fuel platform could shut down the noisy gas powered engine and utilize the electric motor, allowing the UAV to operate in low-altitude areas where noise is undesirable~\cite{yedavalli2019assessment,torija2020effects,cussen2022uav}.

We consider series hybrid UAVs~\cite{TOWNSEND2020e05285} motivated by two scenarios: i) Noise-sensitive surveillance missions, and ii) Widespread use of UAVs in residential and business environments. In both cases, noise and flight range are important. In case of the latter scenario, with this increase use of UAVs will be an increase in ground-level noise in both frequency and intensity.  Thus we envision in the near future location-dependent noise restrictions on low-altitude aircraft.  A series hybrid UAV uses an electric motor powered by a battery pack, which is recharged through a a genset (gas engine and generator). The combustion engine, with its high power and energy density, provides extended flight range when compared to battery storage alone but at the expense of increased noise. In contrast, the battery allows quieter operation but has lower energy density. This paper addresses a coupled path and power planning problem which arises when noise-restrictions are placed on the aforementioned hybrid platform.


As shown in \cite{moshkov2021study}, the main noise source of a UAV with a shrouded propeller is the two-stroke engine powered onboard for power. Thus, a hybrid UAV with gas and battery as fuel sources has both long endurance and the ability to operate in noise-restricted areas by running in battery-only mode. In these scenarios, the path planning problem is coupled with the power management problem. The power management problem must define generator switching to ensure the battery is never depleted and that the generator is not run in noise-restricted zones. The aim of the path planning problem is to find a path of minimum cost while avoiding obstacles.  However, determination of power plan depends on the path due to the noise-restricted zones, and determination of path depends on a feasible power plan existing for the path of question. Thus, the path planning is coupled to the power planning, where the path and power plan must be found in tandem. Due to this complexity, traditional path and power planning methods cannot be applied directly and must be approached as a new problem. In this paper we are concerned with the coupled planning problem, which we refer to as the Noise-Restricted Hybrid Fuel Shortest Path Problem (NRHFSPP).

At some point in the overall planning process, a trajectory subject to vehicle dynamics through the free-airspace must be found. Often, a geometric path is first found that is later processed into a flyable trajectory \cite{gasparetto2012trajectory}. While this smoothed trajectory in general will not be the same as that obtained by solving the continuous trajectory optimization problem directly, the assumption made is this will be on average a tight bound on the true optimal trajectory, depending on the discretization scheme used to construct the graph. However, this two-stage approach is generally much faster to solve than finding the optimal trajectory directly. Of concern in this paper is finding a geometric path rather than a flyable trajectory. Thus, we pose this problem as a discrete optimization problem, where the airspace is discretized into a graph and each edge in the graph is parameterized by cost and energy values needed to define the NRHFSPP, described later.

This work focuses on planning algorithms for the NRHFSPP when posed on a graph and analyzes the performance and scalability of these algorithms. The planning algorithms presented in this paper will work on any graph, given positive edge costs, and thus can be used in combination with many graph construction techniques.  We are not focused on efficient graph construction techniques for a given airspace, examples of such techniques including visibility graphs \cite{scholer2011configuration}, Delaunay triangulation \cite{fortune1995voronoi}, Voronoi diagrams \cite{aurenhammer1991voronoi}, or Steiner point additions \cite{erten2009quality}.  Rather, the focus is fast planning algorithms to work on any general graph.

The paper is ordered as follows. A review of related prior work in the literature is given in Section \ref{sec:lit}. Section \ref{sec:prob_form} gives the specific formulation of the NRHFSPP and Section \ref{sec:MILP} gives the MILP definition of the problem, proof the problem is NP-hard, and proves a fast lower-bound. Section \ref{sec:labeling} presents a labeling algorithm developed to solve the NRHFSPP, proof of its exactness, and derives its time-complexity. The results of extensive numerical testing are given in Section \ref{sec:results}, evaluating two label selection methods for a variety of graph types and sizes. Finally, Section \ref{sec:conc} gives conclusions and directions for future work. 

\section{Literature Review}\label{sec:lit}

The majority of literature addressing planning for alternative fuel UAVs focuses on solar powered~\cite{klesh2007energy, klesh2009solar, hosseini2016energy}, fuel-cell~\cite{dudek2013hybrid, mobariz2015long}, or a combination of the two~\cite{dobrokhodov2020energy}. Generally, these papers seek for methods to improve flight endurance through energy management or path planning. As such they will not directly address the NRHFSPP.

Fuel constraints and refueling events have been considered before in case of the Vehicle Routing Problem (VRP)  variants ~\cite{sundar2016formulations, sundar2017analysis}. The VRP has been studied specifically with Electric Vehicles (EV), referred to as the Electric VRP (EVRP).  An extensive survey of the problems and solution approaches is given in \cite{erdelic2019survey}. Other studies on hybrid vehicle routing are found in  \cite{verma2018electric, doppstadt2016hybrid, vincent2017simulated, hiermann2019routing}. UAV routing with refueling locations is studied in \cite{sundar2013algorithms} and with battery recharging in \cite{alyassi2022autonomous}. A scheduling problem pertaining to UAV fueling events is discussed in \cite{jin2006optimal}, and a similar problem with respect to battery recharge scheduling is given in \cite{mathew2015multirobot}. Such approaches will not solve the NRHFSPP as none deal directly with a path planning problem subject to fuel/battery constraints while solving for a power management plan which is coupled to the path planning.

Constrained path planning is another area relevant to the NRHFSPP. The standard Shortest Path Problem (SPP) is the simplest version and is well known to be solved in polynomial-time. Variants of the SPP generalize the problem to include additional constraints, such the  Constrained SPP (CSPP), Resource Constrained SPP (RCSPP) and RCSPP with Replenishment (RCSPP-R). These are in general NP-Hard.  In general, the time-to-solve these SPP variants depends heavily on the parameters which define the resource constraints. Some manner of  restriction on the problem parameters allows pseduo-polynomial time algorithms to emerge. With these algorithms, the problems can often be scaled up to very large graphs and solved in a reasonable time. 

The NRHFSPP is reminiscent of the resource constrained shortest path problem (RCSPP) when considering the battery and fuel as resources which must never be fully depleted. The standard RCSPP is to find the shortest path between a start and goal node while satisfying constraints on resources which are consumed along the path. The resource constraints can take the form of time-windows, such that  when reaching a given node, the resource values must be within a certain range. The constraints may also be in a capacity constraint form, where the resources are constrained to a maximum across the whole path as they rise throughout travel.  A survey of exact approaches to the RCSPP is given in \cite{pugliese2013survey}. 

Recent work on these problems mainly has been focused on acceleration of existing frameworks. Main approaches to solve these types of problems include: i) Graph processing techniques, where edges and nodes may be removed and bounds placed on nodes with respect to partial paths ii) Lagrange relaxation iii) Dynamic programming, often in the form of label correcting or label setting algorithms iv) recursive algorithms, which propagate partial paths through the graphs to reach the goal node. A series of studies on a recursive ``pulse algorithm" by Lozano et. al. showed great success for RCSPP and similar problems \cite{cabrera2020exact, lozano2016exact, lozano2013exact}. 

In the standard RCSPP, resources cannot be replenished and are always depleting along the path. The RCSPP with replenishment (RCSPP-R) is a variant in which resources consumed along certain arcs and replenished along others. Such a constraint is relevant in crew or aircraft scheduling where a shift switch or maintenance event can replenish a reserve. In network design, signals may need to be amplified/replenished, where visiting certain nodes or arcs functions as signal replenishment. Two recent studies on the RCSPP-R are given in \cite{bolivar2014acceleration} and \cite{smith2012solving}. As pointed out in \cite{bolivar2014acceleration}, the RCSPP-R has received little attention relative to the RCSPP despite the relevance of the replenishment element to real-world problems. However, in the RCSPP-R there is no explicit decision variable for replenishment but is implicit based on the edges used in the path. The NRHFSPP differs from the RCSPP-R in that, while replenishment is allowed, it is dependent on a secondary decision variable rather than being implicit based on which nodes/edges are visited. This increases problem complexity as there is not only a path to be found but also the operation mode of the UAV along that path. 

A prior work done by Cabral et al. aligns closely with the NRHFSPP, referred to by Cabral as the Network Design Problem with Relays (NDPR). Here, the problem is to find a network of minimal cost which connects nodes in a community to an origin node. Relays can be added at nodes to replenish signal strength completely, with the relay placement being a decision variable. There is a maximal distance a signal can travel before requiring a relay. Relay placement incurs a cost in the objective, thus constraining relay placement to the end of both feasibility and optimality. Their work on the NDPR is presented in \cite{cabral2007network, cabral2008wide, cabralPhD}. This work aligns closely with the NRHFSPP primarily due to the secondary decision variable for relay placement. However, our exact applications and problem formulation sufficiently differs and requires a novel approach tailored to the NRHFSPP.

There are a few prior references that look at the NRHFSPP or a very similar problem. One study \cite{manyam2022path} sought to address the NRHFSPP directly by developing a guaranteed a lower-bound on the solution. In \cite{jadischke2023optimal}, a similar problem is presented which searches for the energy optimal path. Lastly, a preliminary version of NRHFSPP and approaches to solve it were published in \cite{scott2022hybrid} which is generalized here. The contributions of this paper beyond the prior work include: i) Proof that the NRHFSPP is NP-Hard, ii) Acceleration of the labeling algorithm to solve the NRHFSPP iii) Proof of the SPP as a lower bound iv) Proof of exactness of the labeling algorithm presented v) derivation of pseudo-polynomial running time for the labeling algorithm, vi) Additional selection methods, node selection and label selection, in the labeling algorithm, vii) Extension of the NRHFSPP to include battery drain due to engine startup and gliding edges defined in graph construction, viii) More extensive numerical evaluation of the presented algorithms.

\section{Problem Formulation}\label{sec:prob_form}
\begin{figure}
     \centering
     \begin{subfigure}[b]{0.2\textwidth}
         \centering
         \includegraphics[width=\textwidth]{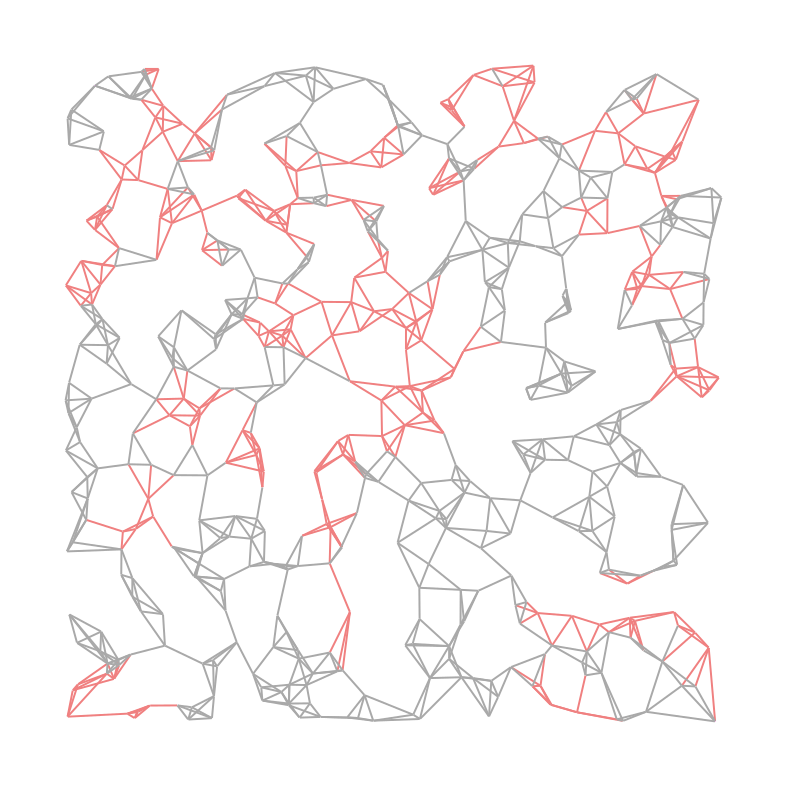}
         \caption{Euclidean Graph}
         \label{fig:graph_euc}
     \end{subfigure}
     \begin{subfigure}[b]{0.2\textwidth}
         \centering
         \includegraphics[width=\textwidth]{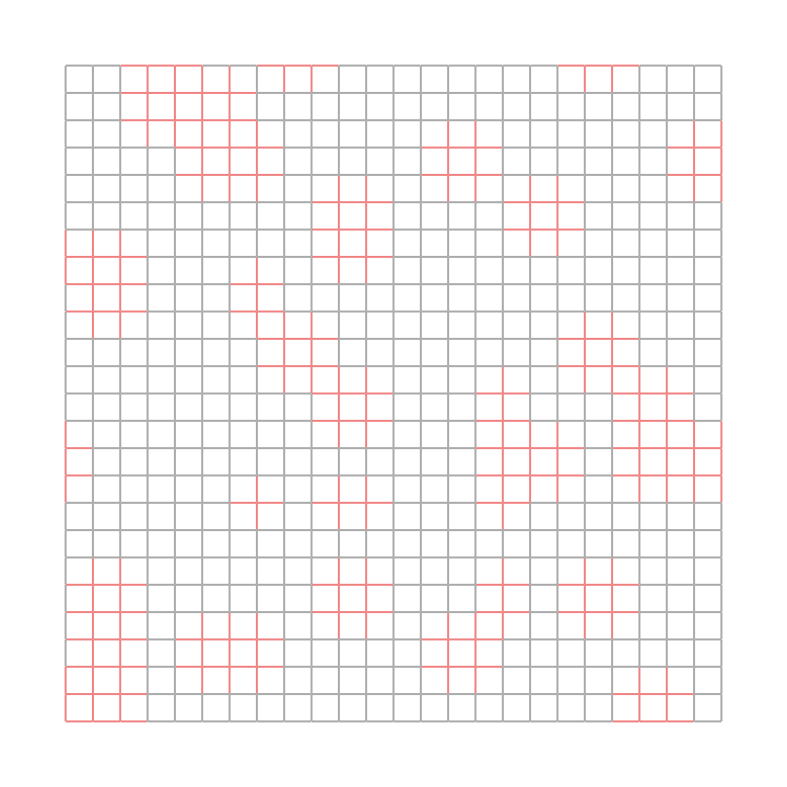}
         \caption{Lattice Graph}
         \label{fig:graph_lattice}
     \end{subfigure}
     \hfill
     \caption{Graph Examples. Noise Restricted Edges in Red.}
     \label{fig:three graphs}
\end{figure}

In this section we thoroughly define the NRHFSPP; the MILP formulation of the same is presented in Section \ref{sec:MILP}. An instance of the problem constitutes the initial UAV battery state $B_0 \in R$, fuel level $Q_0 \in \R$, initial node ($S$), goal node ($T$), and a parameterized graph represented as a set of nodes and edges $(N,E)$. While a node represents a geographical location, an edge represents the path or transition between two nodes. The graph parameters defined for each edge $(i,j) \in E$ are: (i) Edge noise restriction $G_{ij} \in \{0,1\}$, (ii) Edge cost $D_{ij} \in \R$, (iii) Battery consumption $C_{ij}\in \R$ (iv) Recharge of the battery from generator $Z_{ij} \in \R$. A startup cost $V \in \R$, constant across the edges, is also defined to account for the battery drain required to start up the generator. The NRHFSPP aims to find the minimum cost path from $S$ to $T$ and a generator schedule while satisfying noise restrictions, battery state of charge constraints, and fuel level constraints. The battery and fuel levels are constrained to be above a minimum value.  Furthermore, the battery is also restricted to be below a maximum charge level. 

AS described above, in surveillance or urban path-planning applications, there may exist sections of airspace where noise levels are restricted. These noise restrictions translate to constraints on the generator such that the flight mode is restricted to electric-only along those edges. The generator must be in an ``off-state'' while traveling along such edges with noise restrictions.

A maximum fuel state is not enforced as the fuel level never increases along the path -- the fuel level is monotonically non-increasing in this formulation. However, should this problem be expanded to include refueling events, a max fuel state would need to be constrained; this is left as future work as vehicle refueling is not considered herein. The battery charge is expended in two ways: i) discharge while traveling due to  motors power draw ii) discharge at start up of the generator. The start-up battery drain includes both the cost of starting the generator engine as well as any energy needed to heat the engine.\footnote{Starting an engine at altitude requires heating because the temperature at altitude is too cold to support ignition.} 

Along certain edges, where the altitude decreases, we allow the UAV to glide such that there is no power draw from the battery to the motors, under the assumption that the UAV peripherals draw negligible power. The generator can still be run along these gliding edges to recharge the battery. These gliding maneuvers can be performed on the edges with or without noise-restrictions. 

With regard to the gliding edges, it is always advantageous for the UAV to glide whenever allowed because this will use minimal battery energy for the same travel cost compared to flying with motor on. As a result, this affects the energy cost on these edges since there is no power needed for the motors. This allows a third energy storage modality: the potential energy stored by gaining altitude. This altitude-gain can later be utilized in noise restricted air-space  by gliding down and with significantly lower energy cost, thus enabling longer flight duration through the noise-restricted airspace. This potential energy is not tracked, but rather is implicit based on the path.

We consider two types of graphs for the computational experiments; examples of these two types of graphs are illustrated in Figures \ref{fig:graph_euc} and \ref{fig:graph_lattice}, where noise restricted edges are shown in red. The former is referred to as a Euclidean graph and the latter a lattice graph. The construction of these graphs is discussed in Section \ref{sec:results}. Computational results differ slightly between these two types of graph structures and are presented in Section \ref{sec:results}. 

\begin{figure}
\centering
\begin{subfigure}{0.35\textwidth}
    \centering
    \includegraphics[width = .8\textwidth]{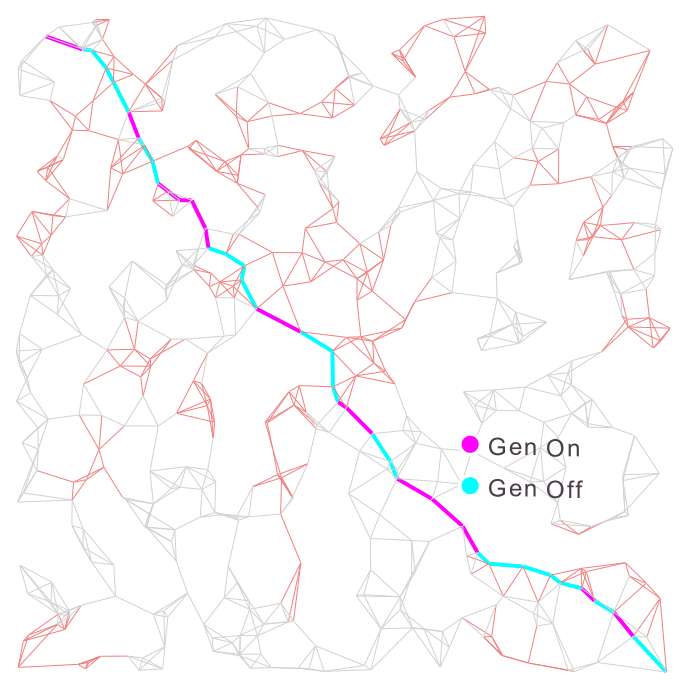}
    \caption{Path and Generator Solution}
    \label{fig:example}
\end{subfigure}
\begin{subfigure}{0.38\textwidth}
    \includegraphics[width = \textwidth]{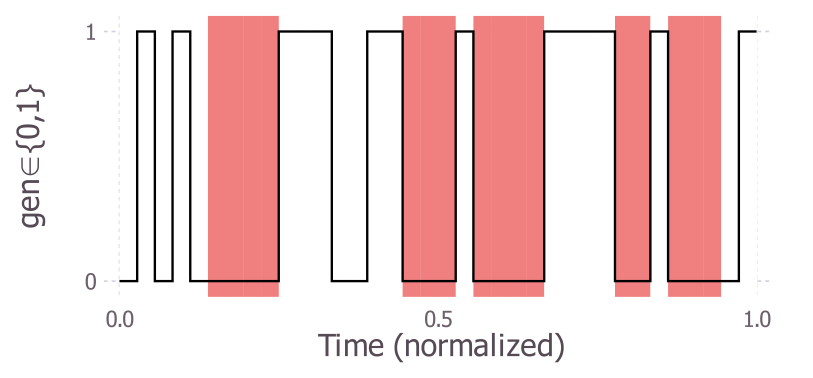}
    \caption{Generator Solution}
    \label{fig:example_gen_NR}
\end{subfigure}
\caption{Example Solution - 2D Euclidean Graph - 550 Nodes}
\end{figure}

A solution to the NRHFSPP includes both the minimum cost path and associated generator schedule. 
 An example solution is shown in Figures \ref{fig:example} and \ref{fig:example_gen_NR}. The generator schedule, shown in Fig. \ref{fig:example_gen_NR}, is overlaid with the noise restrictions (red bands) that occur along the path.

\section{MILP Formulation}\label{sec:MILP}
\subsection{Decision Variables}

$x_{ij} \in \{0,1\}$ - is edge $(i,j)$ used in the solution

$g_{ij} \in \{0,1\}$ - is generator run on the edge $(i,j)$


$b_j \in \R{}$ - battery charge at node $j$

$q_j \in \R{}$ - fuel level at node $j$

\subsection{Constant Parameters}

$D_{ij} \in \R$ - cost of edge $(i,j)$

$C_{ij} \in \R$ - energy cost (battery discharge) of edge $(i,j)$

$Z_{ij} \in \R$ - battery recharge by generator along edge $(i,j)$

$G_{ij} \in \{0,1\}$ - is generator allowed to run along edge $(i,j)$ 


$V \in \R$ - Battery charge required to start generator

$N$ - set of nodes

$E$ - set of edges 

$S$ - start node

$T$ - goal node

$M$ - large constant value

$Q_0$ - initial fuel level

$B_{min}$ - minimum allowed battery charge

$B_0$ - initial battery state

$B_{max}$ - maximum battery state

$\delta(i)$ - set of all nodes connected to node $i$

\subsection{Objective and Constraints}
\begin{align}
     & J^* = \min  \textstyle\sum_{(i,j) \in E} D_{i,j} x_{i,j} \label{MILPobj}\\
    & \textstyle\sum_{j \in \delta(S)} x_{Sj} = 1 \label{MILPdeg1}\\
    & \textstyle\sum_{j \in \delta(T)} x_{jT} = 1 \label{MILPdeg2}\\
    & \textstyle\sum_{j \in \delta(i) }x_{ij} -  \textstyle\sum_{j \in \delta(i) }x_{ji} = 0  && \forall i \in N\setminus \{S,T\} \label{MILPdeg3}\\
    &B_{max} \geq b_j \geq B_{min} && \forall j \in N \setminus \{S\} \label{MILPbatt1}\\ 
    &b_S = B_0 \label{MILPbatt2}\\
    &b_j \leq b_i - C_{ij}  + Z_{ij} g_{ij} \nonumber \\
    & \nonumber- V(1- \textstyle\sum_{k \in \delta(i) \setminus \{j\}} g_{ki}) \\  & + M(1 - x_{ij}) && \forall (i,j) \in E \label{MILPbatt3}\\
    &b_j \geq b_i - C_{ij}  + Z_{ij} g_{ij} \nonumber \\
    & \nonumber- V(1- \textstyle\sum_{k \in \delta(i) \setminus \{j\}} g_{ki}) \\  & - M(1 - x_{ij}) && \forall (i,j) \in E \label{MILPbatt4}\\
    &q_j \geq 0 && \forall i \in N \setminus \{S\} \label{MILPgen1}\\ 
    &q_S = Q_0 \label{MILPgen2}\\ 
    &q_j \leq q_i - Z_{ij}g_{ij} + M(1- x_{ij}) && \forall (i,j) \in E \label{MILPgen3}\\
    &g_{ij} \leq x_{ij} && \forall (i,j) \in E \label{MILPmisc1}\\
    &g_{ij} \leq G_{ij} && \forall (i,j) \in E \label{MILPmisc2} 
\end{align}

Equations \eqref{MILPobj}-\eqref{MILPmisc2} constitute the complete MILP definition of the NRHFSPP. Equation \eqref{MILPobj} gives the objective which is to minimize path cost. Equations \eqref{MILPdeg1}-\eqref{MILPdeg3} give degree constraints such that there is exactly one edge leaving the source node, one edge entering the goal node, and even degree for all other nodes. Equation \eqref{MILPbatt1} constrains the battery level to be between the maximum and minimum charge values. The battery state is updated between nodes as given in Equations \eqref{MILPbatt3} and \eqref{MILPbatt4}, where $C_{ij}$ is the drain on the battery to travel along edge $(i,j)$, $Z_{ij}$ recharges the battery if the generator is used, determined by $g_{ij}$. This constraint, using the big-\textit{M} method, is trivially satisfied if the edge $(i,j)$ is not used in the solution. The generator update equations are given in equations \eqref{MILPgen1}-\eqref{MILPgen3} and are formulated similar to the battery update equations using the same big-\textit{M} technique. The generator schedule is restricted using equations \eqref{MILPmisc1} and \eqref{MILPmisc2}; these equations ensure that the generator cannot be run on a noise restricted edge or an the edge that is not used in the solution.

A common  approach to solve a MILP is branch-and-bound \cite{mitchell2002branch} methods. This approached is broadly applicable to all MILPs and available through commercial solvers such as CPLEX or Gurobi. However, an alternative algorithm can often be implemented which exploits the specific problem structure and out-performs a branch-and-bound method with respect to computation speed. To this end, we develop a labeling algorithm, presented in Section \ref{sec:labeling}, as a fast alternative to solving MILP. 

This labeling algorithm makes use of a lower bound to improve computation time. For a given instance of the NRHFSPP, the Shortest Unconstrained Path (SUP) is the shortest path through the graph from start to goal node with no resource or noise constraints added.  This can easily be found by any standard SPP algorithm such as Djikstra's or $A^*$.

\begin{lemma}[]\label{proof:LB}
The optimal solution of the SUP, defined by Equations \eqref{MILPobj}-\eqref{MILPdeg3}, is a lower bound to the Noise Restricted Hybrid Fuel Shortest Path Problem as described in Equations \eqref{MILPobj}-\eqref{MILPmisc2}.
\end{lemma}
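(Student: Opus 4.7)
The plan is to show that the SUP is a relaxation of the NRHFSPP, so its optimal value must lower bound the NRHFSPP optimum. The key observation is that both problems share exactly the same objective function, namely $\sum_{(i,j) \in E} D_{ij} x_{ij}$ in Equation \eqref{MILPobj}, which depends only on the edge-selection variables $x_{ij}$. Moreover, the SUP is defined by Equations \eqref{MILPobj}--\eqref{MILPdeg3}, which are a strict subset of the constraints of the NRHFSPP.

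First, I would let $(x^*, g^*, b^*, q^*)$ denote any feasible solution to the NRHFSPP. Since this solution satisfies, in particular, the degree constraints \eqref{MILPdeg1}--\eqref{MILPdeg3}, the projection $x^*$ by itself is feasible for the SUP. Denote by $\mathcal{F}_{\text{NRHFSPP}}$ and $\mathcal{F}_{\text{SUP}}$ the feasible sets of the two problems in $x$-space (where $\mathcal{F}_{\text{NRHFSPP}}$ is the set of $x$ for which some completion $(g,b,q)$ makes $(x,g,b,q)$ feasible). Then $\mathcal{F}_{\text{NRHFSPP}} \subseteq \mathcal{F}_{\text{SUP}}$ because removing constraints can only enlarge the feasible set.

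Next, because the objective depends only on $x$, minimizing $\sum D_{ij} x_{ij}$ over the larger set $\mathcal{F}_{\text{SUP}}$ yields a value no larger than minimizing it over $\mathcal{F}_{\text{NRHFSPP}}$. Formally,
\begin{equation*}
J^*_{\text{SUP}} \;=\; \min_{x \in \mathcal{F}_{\text{SUP}}} \sum_{(i,j) \in E} D_{ij} x_{ij} \;\le\; \min_{x \in \mathcal{F}_{\text{NRHFSPP}}} \sum_{(i,j) \in E} D_{ij} x_{ij} \;=\; J^*_{\text{NRHFSPP}},
\end{equation*}
which is the desired inequality.

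This argument is essentially a routine relaxation argument; there is no significant obstacle. The only point worth being careful about is that the SUP and NRHFSPP have different decision-variable spaces (the SUP lacks $g, b, q$), so the statement ``any NRHFSPP solution is also an SUP solution'' needs to be interpreted via projection onto the $x$-coordinates. Once this projection step is stated cleanly, the rest follows immediately from the nesting of feasible sets and the fact that $g, b, q$ do not enter the objective.
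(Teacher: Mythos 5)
Your proof is correct and follows essentially the same relaxation argument as the paper: the NRHFSPP feasible set, projected onto the $x$-variables, is contained in the SUP feasible set, and since both problems share the objective $\sum_{(i,j)\in E} D_{ij}x_{ij}$, the SUP optimum lower-bounds the NRHFSPP optimum. If anything, your version is slightly cleaner --- you state the containment in the correct direction (the paper writes $B \subseteq A$ where it means the constrained set is contained in the unconstrained one) and you explicitly handle the mismatch in decision-variable spaces via projection onto the $x$-coordinates.
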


\begin{proof}
For a given NRHFSPP problem, the set of all feasible solutions is given by set $A$, which is the set of all solutions which satisfy Equations \eqref{MILPdeg1}-\eqref{MILPmisc2}. We refer to the solution with minimum cost from the set $A$ as $x_a^*$. For the same graph, start and goal nodes, and edge costs, we refer to the set of all feasible unconstrained paths by the set $B$. The set $B$ consists of all the paths that satisfy Equations \eqref{MILPdeg1}-\eqref{MILPdeg3},which is the feasible set of the shortest path problem. The optimal solution from set $B$ is referred to as $x_b^*$, and referred to here as the SUP. By definition, $B \subseteq A$, and thus $J(x_b^*) \leq J(x_a^*)$. 
\end{proof}
The effect of tight lower-bound on the computational time of the labeling algorithm is analyzed in Section \ref{sec:results}.

\begin{lemma}[]\label{proof:nphard} 
The Noise-Restricted Hybrid-Fuel UAV Shortest Path Problem, described above, is NP-Hard.
\end{lemma}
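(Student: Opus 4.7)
The plan is to establish NP-hardness via a polynomial-time reduction from the decision version of the Resource-Constrained Shortest Path Problem (RCSPP), which is well known to be NP-complete \cite{pugliese2013survey}. The key insight is that, by turning the noise restriction on everywhere, the generator/fuel subsystem can be switched off inside the NRHFSPP, leaving only a single monotonically depleting resource (the battery) along the path. In that regime the NRHFSPP specializes exactly to an RCSPP, so the latter is a sub-case of the former.

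Concretely, given an RCSPP instance with graph $(N,E)$, source $S$, sink $T$, edge costs $d_{ij}$, edge resource weights $w_{ij}$, budget $W$, and cost threshold $K$, I would construct an NRHFSPP instance on the same graph by setting $D_{ij} = d_{ij}$, $C_{ij} = w_{ij}$, $Z_{ij} = 0$, $G_{ij} = 0$ for every $(i,j) \in E$, and $V = 0$, $Q_0 = 0$, $B_{min} = 0$, $B_0 = W$, $B_{max} \geq W$. Finally, ask whether an NRHFSPP solution of cost at most $K$ exists. The construction is clearly polynomial in the encoding length of the RCSPP input.

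For the correctness step, I would argue that constraint \eqref{MILPmisc2} together with $G_{ij} = 0$ forces $g_{ij} = 0$ on every edge, so the generator is globally disabled. Under this, the fuel equations \eqref{MILPgen1}-\eqref{MILPgen3} collapse to $q_j = 0$ everywhere and are vacuously satisfied, while the battery recurrences \eqref{MILPbatt3}-\eqref{MILPbatt4} reduce, on any edge with $x_{ij}=1$, to the plain update $b_j = b_i - C_{ij}$. The requirement $b_j \geq B_{min} = 0$ along the chosen $S$-$T$ path is then exactly the RCSPP budget constraint $\sum w_{ij} \leq W$. Since edge costs are preserved, a feasible NRHFSPP solution of cost at most $K$ exists if and only if a feasible RCSPP solution of the same cost does, completing the reduction and hence the NP-hardness conclusion.

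The step I expect to require the most care is not the reduction itself but verifying that the auxiliary generator machinery in the MILP, specifically the startup drain term $-V\bigl(1 - \sum_{k} g_{ki}\bigr)$ in \eqref{MILPbatt3}-\eqref{MILPbatt4} and the big-$M$ slack, does not silently tighten or loosen feasibility after the substitutions. Choosing $V = 0$ together with the forced $g_{ij} \equiv 0$ zeroes the startup contribution, and the big-$M$ terms deactivate on any edge with $x_{ij} = 1$, so once those two observations are spelled out the equivalence is immediate. I would note in passing that this reduction is consistent with a pseudo-polynomial algorithm existing for the NRHFSPP, matching the running-time claim to be proved for the labeling algorithm in Section \ref{sec:labeling}.
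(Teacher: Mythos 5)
Your proof is correct and takes essentially the same route as the paper: both establish NP-hardness by exhibiting the RCSPP as a special case of the NRHFSPP via a polynomial-time specialization of the instance data. The only difference is the gadget used to disable the generator subsystem --- the paper relies solely on setting the initial fuel level $Q_0 = 0$ so the generator can never run, whereas you additionally noise-restrict every edge ($G_{ij}=0$) and zero out $V$ and $Z_{ij}$; your version is somewhat more explicit about why the fuel, startup-drain, and big-$M$ machinery collapses, which the paper leaves implicit.
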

\begin{proof}
NP-hardness of the NRHFSPP is proved by showing that RCSPP, which is NP-complete \cite{handler1980dual}, is a special case of the NRHFSPP. The resource constraints in the RCSPP are equivalent to the battery constraints of the NRHFSPP if initial fuel level is 0.  In the NRHFSPP, a 0 fuel level means the generator cannot be run, making all noise-restrictions obsolete, and means the battery level is monotonically decreasing.  Thus, an NRHFSPP instance with initial fuel level at 0 is an instance of an RCSPP, and vice-versa. Therfore, an RCSPP can be solved as a special instance of the NRHFSPP, using any algorithm used to solve a standard NRHFSPP.  Therefore, knowing the RCSPP is NP-complete, the NRHFSPP is at least as hard as an NP-complete problem and thus is NP-hard.
\end{proof}
\section{Labeling Algorithm}\label{sec:labeling}
Here we develop and present a labeling algorithm to solve the NRHFSPP.  A general framework of labeling algorithms can be found in \cite{desrosiers1995time}. The algorithm presented here is similar in structure to the $A^*$ algorithm.  However, there exist fundamental differences to solve the problems where resources need to be tracked. Standard SPP algorithms such as Djikstra's and $A^*$ work by keeping an open list of labels, a single one for each node, to be expanded. Each label in this open list contains only one value, which in the case of Djikstra's is simply the cost of the best path found so far to that node, and each label as an associated path to reach that node with the given cost. Whenever an alternate path to a node is found with lower cost, the existing label is replaced with the label for the new path. Thus, only the lowest cost path to the each node needs to be saved. 

While solving path planning problems such as RCSPP, RCSPP-R, and NRHFSPP, resources need to be tracked along with the cost. To account for this, the definition of labels and the discarding of suboptimal labels differ significantly from SPP algorithms.  In case of resource-constrained path planning, both path cost and resource values must be tracked together while exploring the nodes, and therefore a label needs to be embedded with this information.  Furthermore, paths cannot be compared only in terms of path cost. Given some path to a node, and an alternate path of worse path cost but with more favorable resource consumption, the lower cost path cannot be sure to dominate the other.  The path of higher cost but more favourable resource consumption cannot be discarded as it may be the case that the path of lower cost becomes infeasible when extending it further where the higher cost path remains feasible due to the improved resource consumption.  However, a path can dominate another fully if it has improved cost \textit{and} improved resource consumption (across all resrouce values).  Thus, these resource-constrained path planing algorithms keep a \textit{list} of \textit{undominated} or \textit{efficient} labels for each node in the graph, where each label corresponds to a partial path to reach the associated node.


For NRHFSPP, a label will be of the form: $(d_i^k, b_i^k, q_i^k)$, this is the $k^{th}$ label for node $i$. The path and generator schedule from $S$ to $i$ associated with this label results in a cost of $d_i^k$, a battery charge of $b_i^k$, and a generator fuel state of $q_i^k$. A label $(d_i^1, b_i^1, q_i^1)$, \textit{dominates} another label $(d_i^2, b_i^2, q_i^2)$ if: $d_i^1 < d_i^2$ and $b_i^1 >  b_i^2$ and $q_i^1 > q_i^2$. A label is defined as \textit{efficient} relative to a group of other labels if no other label dominates it.  Here, labels can be compared lexicographically, so that inefficient labels can still be pruned to help reduce the search space. Dominated labels for a given node $i$ are discarded such that only efficient labels are considered in the search space.   

Labels for a given node may be equivalent in label values, although their corresponding path $X$ or generator pattern $Y$ may differ. Two labels $(d_i^1, b_i^1, q_i^1)$ and $(d_i^2, b_i^2, q_i^2)$ are equivalent if:  $d_i^1 = d_i^2$ and $b_i^1 =  b_i^2$ and $q_i^1 = q_i^2$. When such  equivalent labels occur in the search, one of those is arbitrarily discarded. Discarding equivalent labels allows to reduce the search space without losing optimality, and this could impact the computational effort especially in problems with high symmetry. 

We present the pseudo-code of the labeling algorithm in Algorithm \ref{algo:generic}.  The first selection method is referred to as the {\fontfamily{qcr}\selectfont LABEL} method given in Algorithm \ref{algo:LABEL_selection}, and returns the single label of minimum $f$-cost (defined shortly) in the open list.  The second selection method, referred to as {\fontfamily{qcr}\selectfont NODE}, is given in Algorithm \ref{algo:NODE_selection}, and finds the label of minimum $f$-cost in the open list and returns all labels in the open list for the node for which the minimum cost label is associated with. 

The algorithm first initializes an open and closed list as done in steps 1-2 of Algorithm \ref{algo:generic}.  The algorithm runs until the open list is empty, shown in step 3. In each iteration, the next labels to be treated are selected in step 4; either Algorithm \ref{algo:LABEL_selection} or \ref{algo:NODE_selection} can be used in place of $NEW\_LABELS$ to obtain the next set of labels to treat. If the selected labels correspond to the goal node, then the best label in this list is returned as the optimal solution. Otherwise, the selected labels are then treated as shown in steps 9-25. Treatment of a label involves removing it from the open-list (step 24), addition to the closed list (step 25), and extending the associated partial path and generator pattern to the adjacent nodes in the graph, each with the generator on and with the generator off (steps 12-23). These newly created labels are then added to the open list only if they are feasible and efficient, shown in steps 16-17 and 21-22.   Note that if the {\fontfamily{qcr}\selectfont LABEL} selection method of Algorithm \ref{algo:LABEL_selection} is used, the $LABELS$ list in step 4 of Algorithm \ref{algo:generic} is a single label, where as it is a list if the {\fontfamily{qcr}\selectfont NODE} method of Algorithm \ref{algo:NODE_selection} is used.
 
The {\fontfamily{qcr}\selectfont NODE} method was shown to perform better than the {\fontfamily{qcr}\selectfont LABEL} method in case of RCSPP-R \cite{smith2012solving} and a network relay problem \cite{cabralPhD}. We use the two selection methods in the labeling algorithm we develop to solve the NRHFSPP, and empirically evaluate their performance. It should be noted that the {\fontfamily{qcr}\selectfont NODE} selection in \cite{cabralPhD} has improved computational complexity due to the fast merge algorithm found in \cite{brumbaugh1989empirical}, which is applicable to 2-dimensional merge operations (cost, signal strength) but cannot be trivially extended to 3-dimensional merge operations as in our case (cost, battery, and fuel states). Given two lists $X$ and $Y$, \cite{brumbaugh1989empirical} merges them in $O(|X| + |Y|)$ for case of 2-dimensional list entries, whereas a less efficient approach as done for our 3-dimensional merge operations have running time $O(|X| \cdot |Y|)$.

The algorithm utilizes an estimated total cost.  The cost of the $k^{th}$ label for node $i$ is defined as $f_k(i) = d_i^k + h(i)$. The value $d_i^k$ is the cost to reach node $i$ from $S$ by the path of label $k$. The value of $h(i)$ is a heuristic estimate of the cost-to-go from node $i$ to $T$ which never over-estimates the true minimum cost-to-go. This estimated cost allows labels along shorter paths-to-goal to be preferentially selected over partial paths of similar $d_i^k$ but higher estimated cost-to-go. This also allows the stopping criterion that if a label for goal node $T$ is selected for treatment, this label can be returned as the optimal solution. In general, this greatly improves performance over a stopping criterion of searching until the open list $H$ is empty.

The effect of tightness of cost-to-go on algorithm performance is shown in Section \ref{sec:results}, and proof of exactness while using the cost-to-go is given in Section \ref{sec:exactness}. The running time of the algorithm is derived in Lemma \ref{proof:running_time}. 

\begin{algorithm}[htb]
\caption{Generic Labeling Algorithm}
\label{algo:generic}
\small
\SetKwInOut{Input}{input}
\SetKwInOut{Output}{output}
\Input{Cost Matrix: $D \in \R^{NxN}$ \\ Noise Restriction Matrix $G: \in \R^{NxN}$ \\ Adjacency Matrix $A: \in \R^{NxN}$\\ Energy Cost Matrix $C: \in \R^{NxN}$ \\ Energy Transfer Matrix $Z: \in \R^{NxN}$ \\ Initial Battery/Generator States: $B_0, Q_0$\\Starting node index: $S$\\ Final node index: $T$ \\
}
\Output{Minimal Cost Path $X^*$ and Generator Pattern $Y^*$}        
\SetAlgoLined
 $H \gets \lbrace (0, B_0, Q_0) \rbrace$ // \CommentSty{Open list, add start node label}\\ 
 $P \gets \varnothing $ // \CommentSty{ Closed set}\\ 
\While{$H \neq \varnothing$}{ 
    \texttt{LABELS}, $i$ = \texttt{NEW\_LABELS}($H$) // \CommentSty{this calls the functions \texttt{NODE} or \texttt{LABEL}}\\
    \uIf{$i$ = T}{
        $l^* \gets \argmin_{l \in \texttt{LABELS}} f(l)$ \\
         $X^*$, $Y^*$ $\gets$ \texttt{EXTRACTPATH}($F^*$) // \CommentSty{returns the path corresponding to the label $F^*$} \\
        \Return $X^*$, $Y^*$
    }
    \For{$l_k \in \texttt{LABELS}\, \& \, l_k \notin P$}{ 
        // \textit{$l_k: \hspace{1cm} (d_i^k, b_i^k, q_i^k)$ for node $i$ and associated $X_k$, $Y_k$} \\
        
        $X_k$, $Y_k$ $\gets$ \texttt{EXTRACTPATH}($l_k$) \\
        \For{$j \in $ \texttt{NEIGHBORS}($i$)}{
            // Extend $l_k$ with gen on: \\
            $X_a \gets [X_k, j]$,  $Y_a \gets [Y_k, 1]$ \\
            $F_A \gets (d_i^k  + D_{ij}, b_i^k - C_{ij} + Z_{ij}, q_i^k - Z_{ij})$ \\
            \uIf{\texttt{FEASIBLE}($F_A$) \& $j \not\in X_k$ \& \texttt{EFFICIENT}(H, $F_A$)}{
                $H \gets H \cup \hspace{3cm} F_A$ \\}
                // Extend $l_k$ with gen off: \\
                $X_b \gets [X_k, j]$,  $Y_b \gets [Y_k, 0]$ \\
                $F_B \gets (d_i^k  + D_{ij}, b_i^k - C_{ij}, q_i^k)$ \\
            \uIf{\texttt{FEASIBLE}($F_B$) \& $j \not\in X_k$ \& \texttt{EFFICIENT}(H, $F_B$)}{
                $H \gets H \cup \hspace{3cm} F_B$ \\}
        } 
        $H$ $\gets$ $H \setminus l_k$ \\ 
        $P$ $\gets$ $P$ $\cup \hspace{3cm}$ $l_k$ \\
    }
}
\end{algorithm}

\begin{algorithm}[htb]
\small
        \SetKwInOut{Input}{input}
        \SetKwInOut{Output}{output}
        
        \Input{Open Set: $H$}
        \Output{Label with minimum f-cost, and corresponding node}

\SetAlgoLined
 $F \gets \argmin_{l \in H} f(l)$ \\
 
  \Return $F$, \texttt{NODE\_OF\_LABEL}($F$)
\caption{{\fontfamily{qcr}\selectfont LABEL}  Selection Method} \label{algo:LABEL_selection}
\end{algorithm}

\begin{algorithm}[htb]
\small
        \SetKwInOut{Input}{input}
        \SetKwInOut{Output}{output}
        
        \Input{Open Set: $H$}
        \Output{Set of new labels to be treated}

\SetAlgoLined
 $F \gets \argmin_{l \in H} f(l)$ \\
 $n \gets  \texttt{NODE\_OF\_LABEL}  (F) $ \\

  \Return \texttt{LABELS\_OF\_NODE}($n$), $n$
\caption{{\fontfamily{qcr}\selectfont NODE}  Selection Method} \label{algo:NODE_selection}
\end{algorithm}

\subsection{Optimality}\label{sec:exactness}
The labeling algorithm presented is exact and it is proved by the two lemmas stated below.
\begin{lemma}[]\label{proof:never_prune_optimal}
In Algorithm \ref{algo:generic}, labels corresponding to the optimal path are never pruned.
\end{lemma}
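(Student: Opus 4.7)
The plan is to argue by contradiction: suppose some label $L^*_k$ corresponding to the prefix of an optimal path-schedule $(X^*, Y^*)$ up through its $k$-th node $i$ is pruned during the execution of Algorithm \ref{algo:generic}, and take the earliest such $k$ along $(X^*,Y^*)$. Inspection of the algorithm shows that a candidate label is only discarded at the \texttt{FEASIBLE} or \texttt{EFFICIENT} checks in lines 16 and 21 (labels moved to the closed list are still available for path extraction, so they do not count as ``pruned''). Since $L^*_k$ lies on a feasible optimal solution, the feasibility check cannot be the cause, so the pruning must be due to another label $L' = (d', b', q')$ at node $i$ that dominates or is equivalent to $L^*_k = (d^{*k}_i, b^{*k}_i, q^{*k}_i)$, i.e.\ satisfies $d' \le d^{*k}_i$, $b' \ge b^{*k}_i$, and $q' \ge q^{*k}_i$.

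From this I would build a surrogate optimal solution by splicing. Let $(X', Y')$ be the partial path and generator pattern associated with $L'$, and form $(\tilde X, \tilde Y) = (X' \oplus X^*_{i \to T},\ Y' \oplus Y^*_{i \to T})$, where $X^*_{i \to T}$ and $Y^*_{i \to T}$ are the suffixes of $(X^*, Y^*)$ from node $i$ onward. The downstream battery and fuel constraints along this suffix depend monotonically on the resource values at $i$: raising $b$ or $q$ at node $i$ can only preserve feasibility of a feasible tail. Since $(b',q') \ge (b^{*k}_i, q^{*k}_i)$ componentwise, $(\tilde X,\tilde Y)$ is feasible, and its total cost is at most $d' + \sum_{(u,v) \in X^*_{i \to T}} D_{uv} \le J^*$. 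Hence $(\tilde X,\tilde Y)$ is itself an optimal solution whose label at node $i$ is precisely $L'$, which by construction is still alive in $H \cup P$. Applying this splicing step inductively, and switching to the surrogate optimal witness each time one of its prefix labels would be pruned, produces an optimal solution whose entire sequence of labels is never pruned, contradicting the assumption that every optimal path suffers a pruning.

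The main obstacle I anticipate is the generator start-up cost $V$. Because the battery drop when extending from node $i$ along $(i,j)$ depends on whether the generator was running on the incoming arc of $i$, the feasibility of the optimal suffix grafted onto $L'$ could a priori depend on the terminal generator state of $L'$, not only on its triple $(d',b',q')$. I would discharge this by checking that in lines 14--15 and 19--20 of Algorithm \ref{algo:generic} the $V$-penalty is already absorbed into the stored $b'$ at the moment $L'$ is created, so that the monotone-in-$(b,q)$ dominance argument above still goes through without expanding the label to carry an explicit generator-on/off bit. Verifying this bookkeeping cleanly is the delicate step; the remainder of the proof is the standard dominance-preservation induction sketched above.
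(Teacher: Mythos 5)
Your proof follows essentially the same route as the paper's: the optimal prefix label at node $i$ can only be discarded if some surviving label weakly beats it in cost, battery, and fuel simultaneously, and any such label can be spliced with the optimal suffix (feasibility of the tail being monotone in the resource values at the splice node) to yield another optimal witness that is still alive; induction along the path completes the argument. Your version is in fact tighter than the paper's, which instead asserts that ``no other label corresponding to $i$ that is a subpath of a feasible $S$--$T$ path can have a label cost smaller than $d_i^{*}$'' --- a claim that is false in general (a cheaper prefix with worse resource state may still extend to some feasible path) and also unnecessary, since only simultaneous domination in all three coordinates triggers pruning; your exchange argument uses exactly the weaker, correct fact. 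On the start-up drain $V$: your worry is legitimate in principle --- if the battery update on $(i,j)$ depended on the generator state of the arc entering $i$, as in constraints \eqref{MILPbatt3}--\eqref{MILPbatt4}, then the triple $(d,b,q)$ would not be a sufficient label state and the dominance argument would need an explicit generator-on/off bit in the label. However, the label-extension steps of Algorithm \ref{algo:generic} as printed omit $V$ entirely, so for the algorithm as stated the concern is vacuous and your proof is complete; note that the paper's own proof does not address this point at all, and neither treatment resolves the $V>0$ case.
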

\begin{proof}
Let $P^* = \{S, \ldots, T\}$ be the optimal path from start node $S$ to goal node $T$, and let $p_i^*$ be a sub-path of $P^*$ from node $S$ to node $i$, $\forall i \in P^*$. The sub-path $p_i^*$ has associated label $(d_i^{*}, b_i^{*}, q_i^{*})$. When this label is checked for efficiency in the open list, $H_i$, it will never be pruned as it is the optimal partial path to node $i$. No other label corresponding to $i$ that is a subpath of a feasible $S-T$ path can have a label cost smaller than $D_i^{*}$. If an alternate label is of equal label values such that $d_i^* = d_i^k$, $b_i^* = b_i^k$, and $q_i^* = q_i^k$, then one of them is arbitrarily added and the optimal is not pruned as the labels are equivalent in all metrics. If there exist labels with equal cost but different values of $b_i$ and $q_i$, then only the dominated labels are pruned. Because the labels corresponding to the minimum cost path will remain un-dominated, they are never pruned. This is true for all nodes in $N$, therefore labels corresponding to the optimal path are never pruned at any node $i \in P^*$.
\end{proof}

By Lemma \ref{proof:never_prune_optimal} it is known that labels corresponding to the optimal path will never be pruned. In Lemma \ref{proof:never_pick_suboptimal} it is proven that the labels for the optimal feasible path will always be selected for expansion  before feasible, suboptimal labels of greater cost.

\begin{lemma}[]\label{proof:never_pick_suboptimal}
The labeling algorithm  described in Algorithm \ref{algo:generic} using label selection method given in Algorithm \ref{algo:LABEL_selection} will never select a label to expand corresponding to a partial path of a feasible, suboptimal path.
\end{lemma}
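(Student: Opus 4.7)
The plan is a standard $A^*$-style correctness argument specialized to the labeling algorithm: proceed by contradiction and, at the critical iteration, exhibit an open-list competitor sitting on the optimal path whose $f$-value is provably at most the optimal cost $J^*$. Concretely, suppose the claim fails, so that at some iteration Algorithm \ref{algo:generic} (with the LABEL rule of Algorithm \ref{algo:LABEL_selection}) selects for expansion a label $l'$ whose associated path-and-generator pair coincides with a feasible but suboptimal complete solution $P'$ of cost $J(P') > J^*$; this is the operative case, since any partial path of a feasible, suboptimal solution is eventually extended by the algorithm into exactly such a label at $T$.

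First, I would invoke Lemma \ref{proof:never_prune_optimal} to obtain a label $l^*$ along the optimal path $P^*$ that is still present in the open list $H$ at this iteration. The existence of such $l^*$ requires a short sub-argument: Lemma \ref{proof:never_prune_optimal} guarantees that no prefix-label of $P^*$ is ever pruned, so if every one of them had already been moved from $H$ into the closed set $P$, then in particular the label at $T$ corresponding to $P^*$ itself would have been selected in some earlier iteration, and Algorithm \ref{algo:generic} would have returned the optimal solution via steps 5--8, contradicting the assumption that it is now selecting $l'$. Next, I would bound $f(l^*)$: if $l^*$ sits at node $i \in P^*$ with prefix cost $d_i^*$, then the cost-to-go along $P^*$ from $i$ to $T$ is exactly $J^* - d_i^*$, which is an upper bound on the true minimum cost-to-go from $i$; admissibility of $h$ then yields $h(i) \leq J^* - d_i^*$, and hence $f(l^*) = d_i^* + h(i) \leq J^*$.

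The contradiction follows immediately: since $l'$ sits at $T$ with label cost $d_T^{l'} = J(P')$ and $h(T) = 0$, we have $f(l') = J(P') > J^* \geq f(l^*)$, whereas the LABEL rule of Algorithm \ref{algo:LABEL_selection} selects the strict minimizer of $f$ on $H$. The main obstacle, and the only step that is not a routine application of admissibility, is the existence of the competitor $l^*$ in the \emph{open} list at the critical iteration; it does not suffice to know that $l^*$ is un-prunable, since $l^*$ could in principle already sit in the closed set, which is precisely where the termination-semantics sub-argument above is required. The remaining inequalities are immediate from the definitions of $f$ and the admissibility of $h$.
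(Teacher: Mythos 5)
Your proof is correct and follows essentially the same $A^*$-style admissibility argument as the paper: both exhibit an open-list label lying on the optimal path whose $f$-value is at most $J^*$ and derive a contradiction with the minimum-$f$ selection rule of Algorithm \ref{algo:LABEL_selection}. Your version is in fact slightly more careful than the paper's, since you explicitly justify that such a competitor must still reside in the \emph{open} list (via the termination semantics at $T$), a step the paper's label $C$ takes for granted.
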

\begin{proof}
 Consider three labels: i) $A$, a label for the goal node $T$ which is sub optimal; ii) $B$, a label for the goal node $T$ which is optimal; iii) $C$, a label for node $j$, which corresponds to the sub-path from node S to node $j$ of the path associated with label $B$. 

Let $g(Z)$ be the cost-to-arrive for the path associated with a label $Z$. The total cost of label $Z$ is defined by $f(Z) = g(Z) + h(i)$ where $i$ is the node associated with label $Z$, and $h(i)$ is a heuristic estimate of the cost-to-go and is a lower bound to the optimal path from node $i$ to goal node $T$. Note that $h(T) = 0$ and thus $f(A) = g(A)$ and $f(B) = g(B)$.

By definition, $g(A) > g(B)$. Assume the labeling algorithm selects label $A$ instead of $C$. If $h(\cdot)$ is an admissible heuristic, then $g(B) = f(B) \geq f(C)$, as minimum-cost-to-go is greater than or equal to estimated cost-to-go by definition of the admissible heuristic.

If $C$ is \textit{not} chosen for expansion over $A$, this implies $f(C) \geq f(A)$. Therefore, $f(B) \geq f(C) \geq f(A) \implies  f(B) \geq f(A)$. 
%
This contradicts the assumption that $A$ is suboptimal, and proves the lemma.
\end{proof}

Labels corresponding to the optimal solution will never be pruned, and labels for the optimal solution will always be treated over labels corresponding to a feasible, sub-optimal solution. The above two lemmas together complete the proof of exactness of Algorithm \ref{algo:generic} when using label selection \ref{algo:LABEL_selection}. 

 \subsection{Expected Running Time}\label{subsec:running_time}
 The pseudo-polynomial running time is derived under the assumption that resource consumption and regeneration take only integer values. That is, $Z_{i,j}, C_{i,j} \in \mathbb{Z} \quad \forall (i,j) \in E$. With such a restriction, the number of labels per node can be bound based on maximum battery charge and fuel level. This is done to find the worst-case running time, where the worst-case for the algorithm is finding and treating every possible label in the problem, thus giving an upper bound on running time. 

 \begin{lemma}[]\label{proof:running_time}
 The Labeling algorithm described in Algorithm \ref{algo:generic} when using selection method of Algorithm \ref{algo:LABEL_selection} has a pseudo-polynomial running time.    
 \end{lemma}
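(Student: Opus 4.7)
My plan is to bound, under the integrality assumption $Z_{ij}, C_{ij} \in \mathbb{Z}$, the total number of efficient labels the algorithm can ever create, and then bound the per-label work. First I would observe that under integer edge consumption and regeneration the battery state $b_i^k$ at any node takes values only in $\{B_{min}, B_{min}+1, \ldots, B_{max}\}$ and the fuel state $q_i^k$, since it is monotonically non-increasing from $Q_0$, takes values only in $\{0,1,\ldots,Q_0\}$. Hence at any single node the set of reachable $(b,q)$ pairs has cardinality at most $K := (B_{max}-B_{min}+1)(Q_0+1)$.

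Next I would use the dominance rule to bound the number of \emph{efficient} labels per node by $K$. Two labels at the same node that share the same $(b,q)$ pair are comparable in the single remaining coordinate $d_i^k$, so only the one of smallest cost can be efficient; all others are pruned by the \texttt{EFFICIENT} check. Summing over the $|N|$ nodes, the total number of labels ever added to $H \cup P$ is at most $|N|\cdot K$, which also upper bounds the number of outer iterations of Algorithm \ref{algo:generic}. Lemma \ref{proof:never_prune_optimal} and Lemma \ref{proof:never_pick_suboptimal} guarantee that this truncated enumeration still returns the optimum, so correctness is not harmed by the dominance pruning.

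Then I would account for the per-iteration work. Each selected label is extended along at most $\deg(i)\le |N|$ outgoing edges, producing at most two candidate labels per edge (generator on and off). For each candidate, feasibility is $O(1)$ and the efficiency test scans the at-most-$K$ labels currently stored for the target node, costing $O(K)$. Label selection via a priority queue keyed on $f(\cdot)$ adds an $O(\log(|N|K))$ factor per extraction and insertion. Multiplying the $|N|\cdot K$ iterations by the $O(|N|\cdot K)$ work per iteration (dominated by the neighbor loop and efficiency test) yields a running time of $O\!\bigl(|N|^2 K^2 \log(|N|K)\bigr) = O\!\bigl(|N|^2 (B_{max}-B_{min})^2 Q_0^2 \log(|N|(B_{max}-B_{min})Q_0)\bigr)$, which is polynomial in $|N|$ and in the magnitudes $B_{max}-B_{min}$ and $Q_0$ of the resource parameters, i.e.\ pseudo-polynomial.

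I expect the main obstacle to be the careful justification that no further labels beyond the $K$ per node can persist in $H\cup P$: in particular, arguing that lexicographic equivalence ties are broken by arbitrary discard (so we never store two labels with identical $(d,b,q)$) and that the efficiency check in Algorithm \ref{algo:generic} correctly prevents a dominated label from re-entering after its dominator has been pruned. Once this bookkeeping is pinned down, the running time bound follows by straightforward multiplication of the label count with the per-label cost.
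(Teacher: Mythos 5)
Your proof is correct and follows essentially the same route as the paper's: use integrality of $C_{ij}$ and $Z_{ij}$ to bound the number of efficient labels per node, sum over nodes to bound the total number of label treatments, and multiply by the per-treatment cost of extending along at most $e_{max}$ edges (two candidates each) and checking dominance against the stored labels. The only notable difference is your per-node bound of $(B_{max}-B_{min}+1)(Q_0+1)$ versus the paper's $\max(B_{max}, Q^i_{max})$; your product bound is looser but more transparently justified for the three-dimensional dominance relation (one efficient cost per reachable $(b,q)$ pair), and either bound yields the claimed pseudo-polynomial running time.
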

 \begin{proof}

We first define the maximum number of labels for node $i$ as $D^i = \max (B_{max}, Q^i_{max})$ where $Q^i_{max}$ is the maximum possible fuel level at node $i$. This can be found as $Q^i_{max} = \min(Q_0 - (E^*_i - B_0), Q_0)$, where $E^*_i$ is the minimum energy cost to reach node $i$. Calculating $Q^i_{max}$ rather than using $Q_0$ accounts for nodes which   must use some level of generator fuel to reach. This minimum fuel usage is calculated by $(E^*_i - B_0)$, being a positive number only if the minimum energy path uses more energy than the initial battery state. If this is not the case, $Q_0 < Q_0 - ((E^*_i - B_0))$ and therefore $Q^i_{max} = Q_0$. 

Next we define $D = \sum_{i \in N} D^i$ as the total labels on the graph and $d = max_{i \in N} D^i$ as the largest number of labels for a node across the whole graph. We also define $e_{max} = \max_{i \in N} \delta(i)$ as the greatest number of edges out of a node across all nodes, where $\delta(\cdot)$ is the number of edges out of node $i$. Note that $e_{max} \leq N$.

\noindent We simplify the pseudo-code first for conciseness:

i) Initialize open and closed list - O(N)

ii) Pick label - O(1) 

iii) Treat current label - O(2 $e_{max} d$):

iv)  For each outgoing arc, make new label q

iv) Return to step ii)

We can bound the number of label treatments based on the above simplified form. Step 2 occurs at most $D$ number of times, and will take O(1) operations if a heap is used. For each occurrence of step 2, step 3 is executed. Step 3 will occur at most $D$ times, once for every occurrence of step 2. The complexity of step 3 can be bounded based on the number of edges out of the nodes. When treating a label, the maximum number of edges out will be $e_{max}$, and 2 new labels will be created for each edge during treatment (gen on and gen off). Thus, at most $2 e_{max}$ new labels will be made. For each new label, it must be compared to prior labels made for the given node, which is at most $d$ number of labels, thus each comparison for each new label is at most $d$ operations. Therefore, the operations for step 3 is bounded by $2 e_{max} d$. The running time for all steps is therefore: $O(N + D(1 + 2 e_{max} d )) \approx O(Dd e_{max})$.
\end{proof} 

Note that the preceding proof holds generally for the case when resource consumption and regeneration ($Z_{i,j}$ and $C_{i,j}$) take on non-integer quantized values, i.e., the set of values for the resource consumption and regeneration is a finite set.

\begin{figure}[t]
    \centering
    \includegraphics[width = 0.4\textwidth]{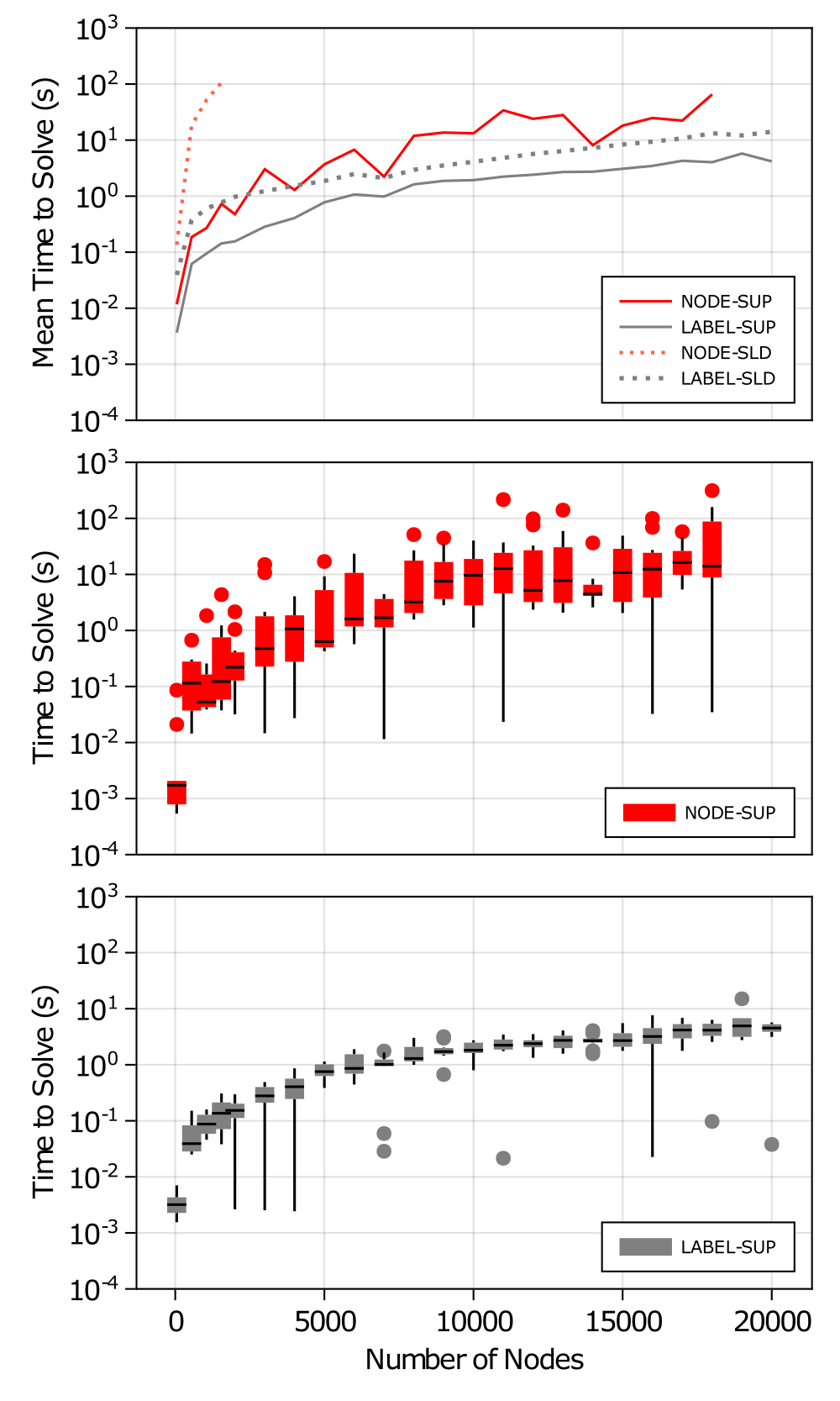}
    \caption{Computational Times for 2D Euclidean Graph versus Number of Nodes - i) Top: Results for SUP and SLD Lower Bounds (LB) for NODE and LABEL Selection Methods; ii) Middle: Boxplot for NODE-SUP LB using NODE Selection Algorithm; iii) Middle: Boxplot for NODE-SLD LB using LABEL Selection Algorithm }
    \label{fig:euc2D}
\end{figure}
\begin{figure}[t]
    \centering
    \includegraphics[width = 0.4\textwidth]{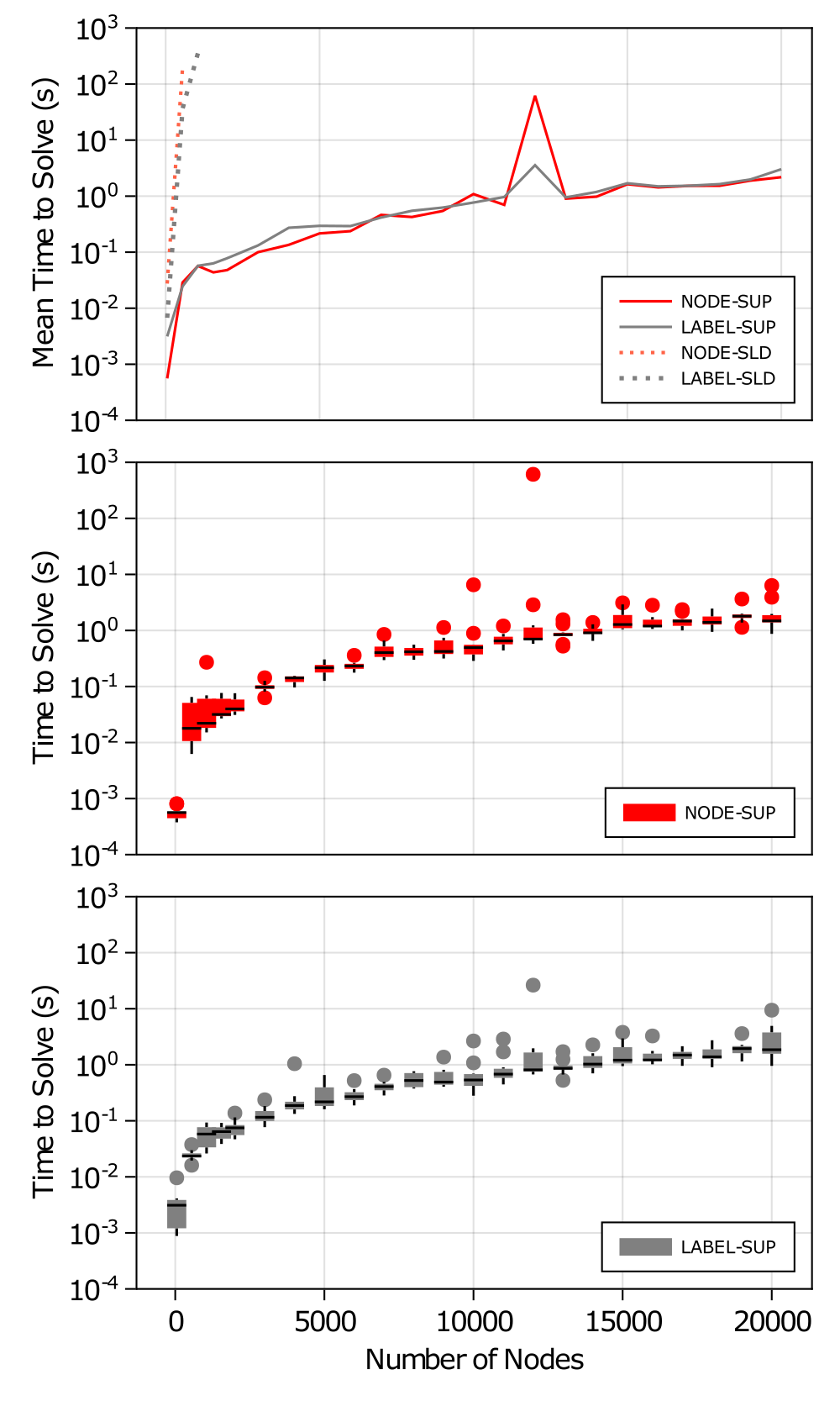}
    \caption{Computational Times for 3D Euclidean Graph versus Number of Nodes - i) Top: Results for SUP and SLD Lower Bounds (LB) for NODE and LABEL Selection Methods; ii) Middle: Boxplot for NODE-SUP LB using NODE Selection Algorithm; iii) Middle: Boxplot for NODE-SLD LB using LABEL Selection Algorithm }
    \label{fig:euc3D}
\end{figure}

\begin{figure}
    \centering
    \includegraphics[width = 0.4\textwidth]{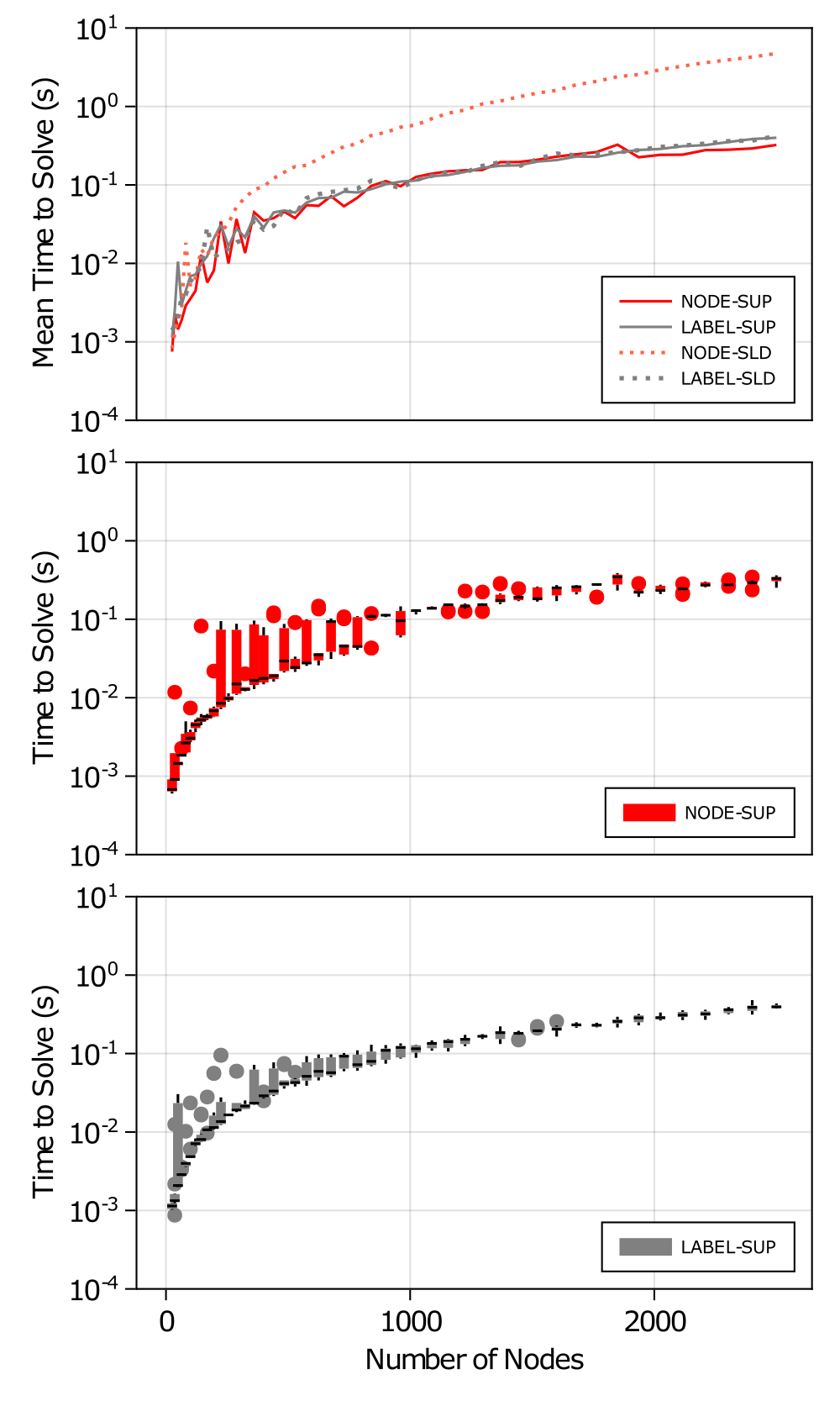}
    \caption{Computational Times for 2D Lattice Graph versus Number of Nodes - i) Top: Results for SUP and SLD Lower Bounds (LB) for NODE and LABEL Selection Methods; ii) Middle: Boxplot for NODE-SUP LB using NODE Selection Algorithm; iii) Middle: Boxplot for NODE-SLD LB using LABEL Selection Algorithm }
    \label{fig:lattice2D}
\end{figure}

\begin{figure}[t]
    \centering
    \includegraphics[width = 0.4\textwidth]{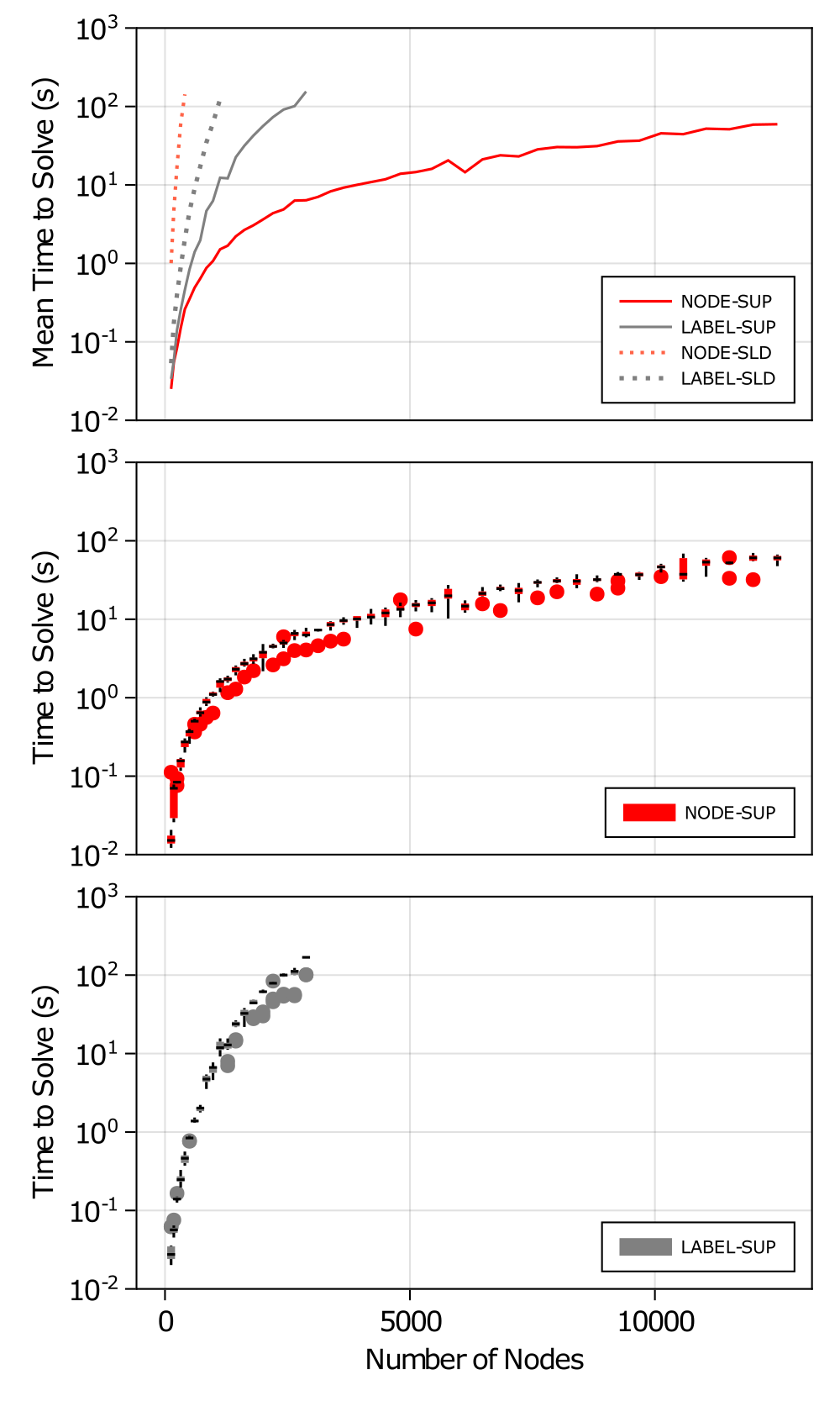}
    \caption{Computational Times for 3D Lattice Graph versus Number of Nodes - i) Top: Results for SUP and SLD Lower Bounds (LB) for NODE and LABEL Selection Methods; ii) Middle: Boxplot for NODE-SUP LB using NODE Selection Algorithm; iii) Middle: Boxplot for NODE-SLD LB using LABEL Selection Algorithm }
    \label{fig:lattice3D}
\end{figure}
\begin{figure}
    \centering
    \includegraphics[width = 0.4\textwidth]{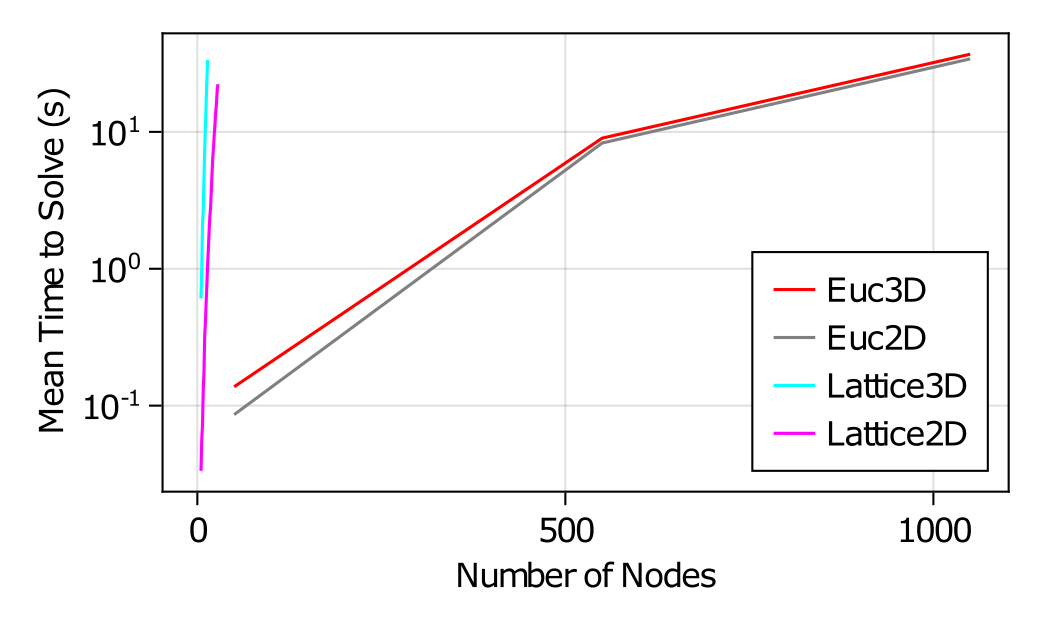}
    \caption{CPLEX Branch and Bound Results}
    \label{fig:CPLEX}
\end{figure} 

\section{Results}\label{sec:results}
We present the results of computational experimental with two different types of  graphs: lattice graphs and Euclidean graphs, samples of each are shown in Figures \ref{fig:graph_euc} and \ref{fig:graph_lattice}. In lattice graphs, nodes are evenly placed in a lattice/grid, and noise restricted zones are randomly generated as rectangles (or rectangular prisms for 3D case) in the airspace such that edges within these regions are noise-restricted. The other set of experiments are done using Euclidean graphs that are produced by randomly placing node locations, with each node connected to its 4 nearest neighbors by Euclidean distance. The generator noise restrictions are chosen randomly similar to the lattice graphs. The average percentage of noise-restricted edges over all instances is $32$\%. For each graph type and size, 10 instances were generated to be solved by the labeling algorithm with both the node selection and label selection methods.

For all problem instances, the edge cost $D_{ij}$ is the euclidean or straight-line-distance (SLD) between nodes $i$ and $j$.  Further, the energy consumption $C_{ij}$ and generator energy transfer $Z_{ij}$ along an edge are proportional to the edge cost, and are the same in either direction along the edge, with exception of edges with changes in altitude. Traveling along edges with an increase in altitude has higher battery drain $C_{ij}$ and generator energy transfer $Z_{ij}$. Edges with a decrease in altitude allow gliding through the edge such that there is no battery drain, and proportionally lower energy transfer $Z_{ij}$.

For each instance, the start and end goals are chosen to be the two nodes farthest apart (by Euclidean distance). In general, start and goal nodes which are closer together are easier problems to solve. Placing them as far apart as possible ensures that for a given graph size, the most difficult instances are generated. This is to the end of mitigating variance in difficulty between instances of same graph size due variability of start-goal distances.

The computational experiments were done for each graph type in both 2D and 3D environments. In 3D environments, the Euclidean graphs are produced similar to the 2D instances, where nodes are randomly placed in 3D space. The 3D lattice graphs have nodes placed in a 3-dimensional lattice. Noise restrictions are generated in a similar manner, where prisms are randomly generated and all the edges with both ends inside the prisms are set to be noise restricted. 

The Euclidean graphs connect each node to its 4 closest neighbors, such that each node has a minimum of $4$ edges connected to it with some nodes having more than $4$. The 2D lattice graphs, by definition, have $4$ outgoing edges from each node, except for the boundary nodes. The 3D lattice graphs have $12$ edges out of each node with exception to the nodes at the limit of the space.  Here, $4$ orthogonal movements (left, right, forward, backward) are allowed in addition edges for the $4$ orthogonal movements while moving up or down.  

The exactness of the labeling algorithm is experimentally verified by solving the MILP formulation for a subset of the problem instances with CPLEX \cite{cplex2009v12} solver. This verification was done only for the instances where computation time of CPLEX is tractable. The time-to-solve with CPLEX is shown in Figure \ref{fig:CPLEX}, where it is seen that the problems quickly becomes intractable relative to the scaling of the labeling algorithm.

The computation time required by the labeling algorithm using each of the two label selection methods ({\fontfamily{qcr}\selectfont NODE} and {\fontfamily{qcr}\selectfont LABEL}) with respect to graph size (number of nodes) is shown in Figures \ref{fig:euc2D} and \ref{fig:euc3D} for the Euclidean graphs, and in Figures \ref{fig:lattice2D} and \ref{fig:lattice3D} for the lattice graphs. The plots show results when SUP and Straight-Line-Distance (SLD) are used as Lower Bound (LB). For the Euclidean Graphs, it is clear that the {\fontfamily{qcr}\selectfont NODE} and {\fontfamily{qcr}\selectfont LABEL} selection methods both scale well. The performance of the two  is quite similar for the 3D Euclidean graphs. However, for the instances on the 2D Euclidean graphs, the {\fontfamily{qcr}\selectfont LABEL} selection method is clearly better, and it could solve instances with 20,000 nodes with only few seconds of computation time required. Whereas, the {\fontfamily{qcr}\selectfont NODE} method takes over a minute to solve those instances. In case of 2D lattice instances, the performance of the two selection methods is similar, the {\fontfamily{qcr}\selectfont NODE} method is little faster. The difference in the performance between the two selection methods is quite large for the 3D lattice graphs, where the {\fontfamily{qcr}\selectfont NODE} method performs significantly better than the {\fontfamily{qcr}\selectfont LABEL} method. Here, the {\fontfamily{qcr}\selectfont LABEL} takes few minutes to solve instances of 2880 nodes, and the {\fontfamily{qcr}\selectfont NODE} method solves those same instances in few seconds. This method was able to solve instances of up to 12500 nodes in less than a minute. As stated above, the 3D lattice instances have significantly higher connectivity between nodes, which in general makes the problems more difficult due to higher branching factor. The {\fontfamily{qcr}\selectfont NODE} method performs significantly better, indicating it is more robust to increases in graph connectivity as compared to the {\fontfamily{qcr}\selectfont LABEL} method. Overall, it is clear both methods are able to solve large instances of the NRHFSPP.

It should be noted that there is a larger variance in time-to-solve in the Euclidean graphs as compared to the lattice graphs. This is likely due to the uniform structure of the lattice graphs, where the only difference between graphs of the same size is the placement of quiet zones. The Euclidean graphs have higher variability in graph structure due to the random placement of nodes and the quiet zones, resulting in greater variance in time-to-solve when compared to the lattice graphs.

The cost-to-go estimation has a significant effect on performance of the labeling algorithm. As with the $A^*$ algorithm, improved tightness on cost-to-go for each node in the graph in general leads to improved computational time required to solve the problem. This effect is shown in Figures \ref{fig:euc2D} and \ref{fig:euc3D} for the Euclidean graphs and in Figures \ref{fig:lattice2D} and \ref{fig:lattice3D} for the Lattice graphs. The time-to-solve using SUP to estimate cost-to-go (calculated via $A^*$) is shown alongside the same using straight-line-distance (SLD) to the goal node to estimate cost-to-go. Based on the results, it is clear that a tighter estimate on cost-to-go improves performance for both the lattice and Euclidean graphs. This behavior is expected, as a tighter cost-to-go estimation reduces the number of labels created in a similar manner of cost-to-go tightness when using $A^*$ for SPP. 

\subsection{Effect of Graph Connectivity}
In this section, we study more directly how the graph connectivity affects time-to-solve. We consider only the 2D Euclidean graphs, as they saw the best overall performance, with edge parameters as described above.  
For a given node placement, we vary the number of edge connections to test variation in graph connectivity. This is illustrated in Figures \ref{fig:4conn} and \ref{fig:12conn}, for a given placement of 2000 nodes. Note that the edge counts shown in the figure captions are counting in an undirected manner, such that an edge between two nodes is counted once while allowing travel in either direction.
 
We test graphs from 50 nodes up to 20,000 nodes, with 10 instances generated for each category. For each instance (same node placement and noise-restricted zones), we add edge connections from 4 to 12 nearest-neighbor connections. 

\begin{figure}
     \centering
     \begin{subfigure}[b]{0.2\textwidth}
         \centering
         \includegraphics[width=\textwidth]{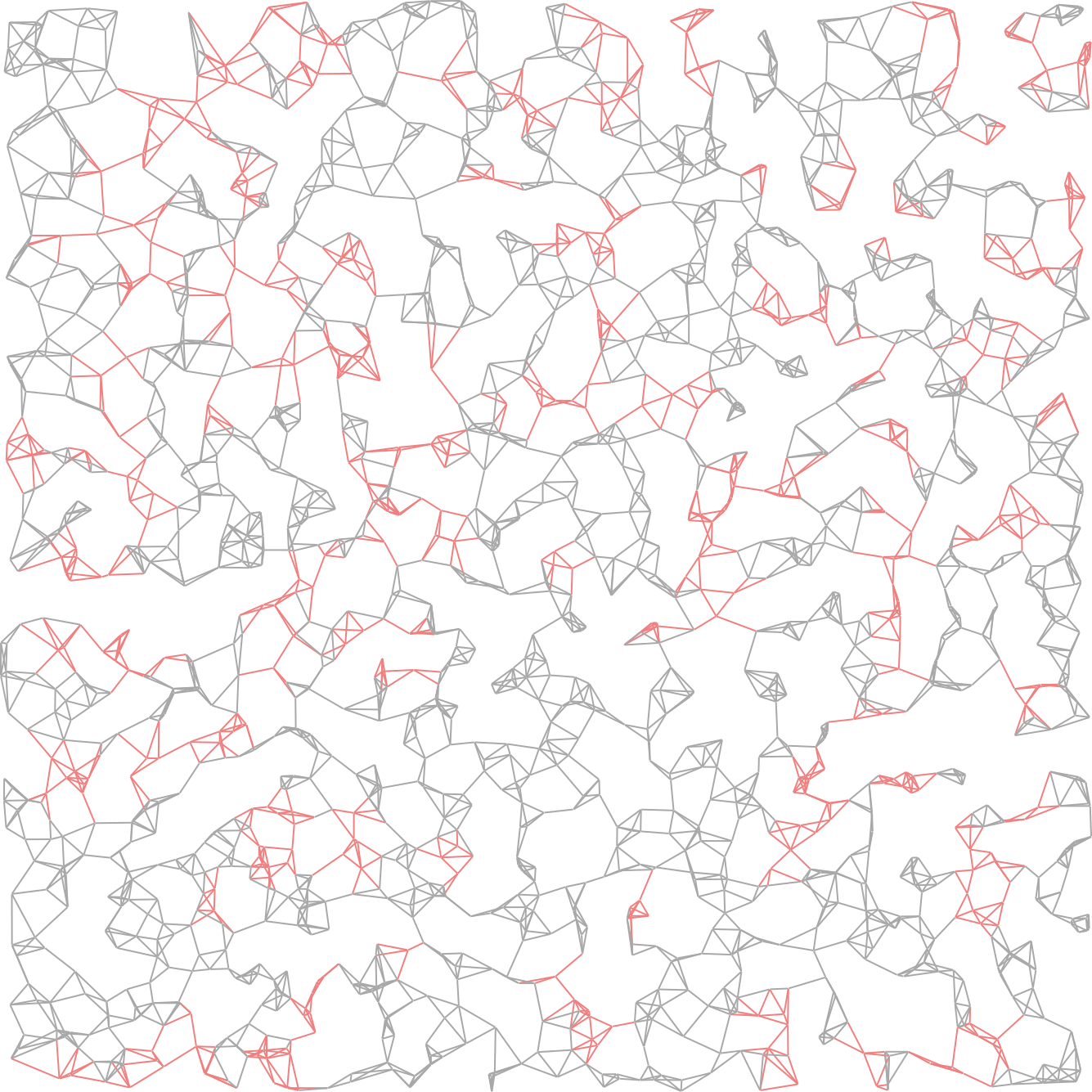}
         \caption{4-nearest-neighbor edge connections - 4869 total edges}
         \label{fig:4conn}
     \end{subfigure}
     \begin{subfigure}[b]{0.2\textwidth}
         \centering
         \includegraphics[width=\textwidth]{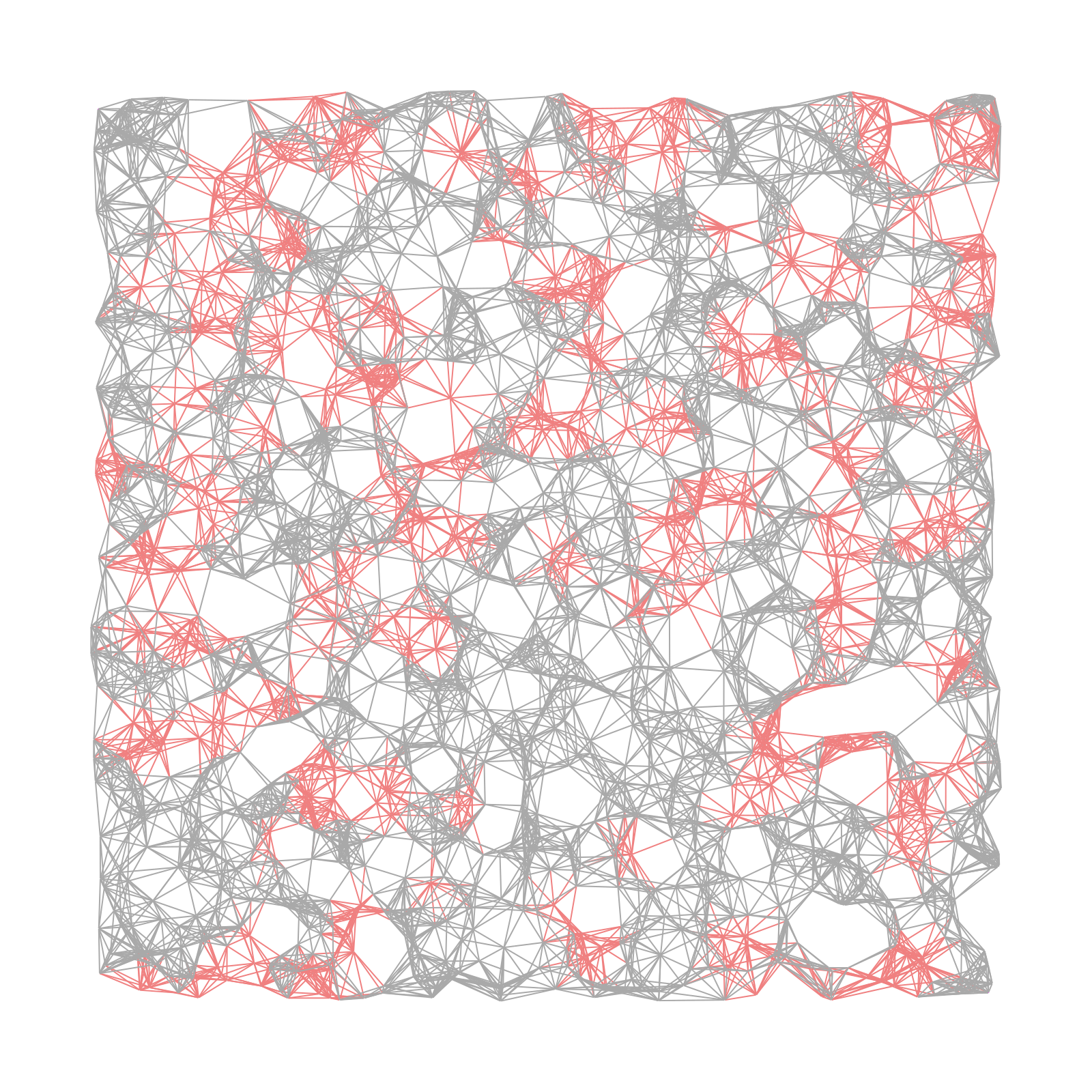}
         \caption{12-nearest-neighbor edge connections - 13644 total edges}
         \label{fig:12conn}
     \end{subfigure}
     \hfill
     \caption{Euclidean Graph with 2000 Nodes with Different Connectivity}
     \label{fig:three graphs}
\end{figure}

\begin{figure}
    \centering
    \includegraphics[width = 0.4\textwidth]{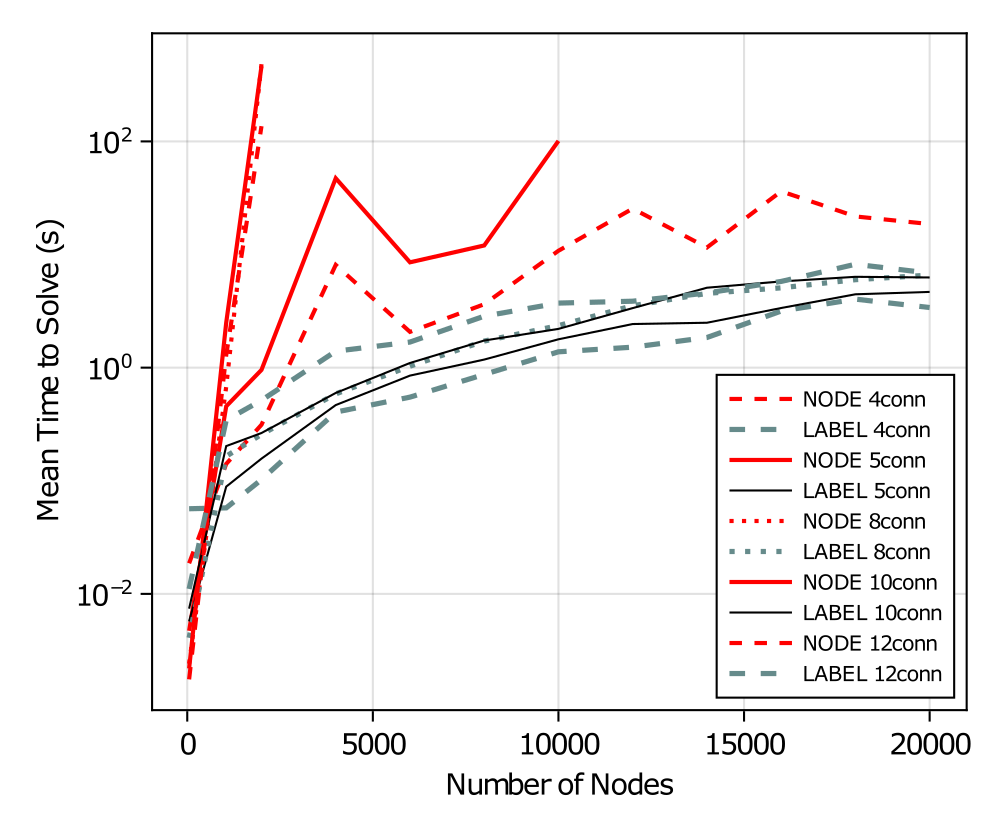}
    \caption{Computational Times for 2D Euclidean Graphs with Different Connectivity using the NODE and LABEL Selection Methods}
    \label{fig:connectivity}
\end{figure}

Figure \ref{fig:connectivity} shows the results of this testing. We see that the {\fontfamily{qcr}\selectfont LABEL} selection method scales much better with increases in connectivity than the {\fontfamily{qcr}\selectfont NODE}, where the latter quickly becomes intractable for instances of larger connectivity. The {\fontfamily{qcr}\selectfont LABEL} selection method shows an obvious increases in computation time as connectivity increases, though not as harsh as the change in {\fontfamily{qcr}\selectfont LABEL} computation time. Though the instances with higher connectivity are significantly harder to solve, the {\fontfamily{qcr}\selectfont LABEL} selection method still requires only a few seconds of computation time for instances with $20,000$ nodes. 

We infer that poor performance of the {\fontfamily{qcr}\selectfont NODE} algorithm relative to the {\fontfamily{qcr}\selectfont LABEL} is directly due to the increased connectivity. In case of the {\fontfamily{qcr}\selectfont NODE} selection, a higher connectivity means that for each label treated there is a larger number of new labels created.  In addition to more new labels for each lablel treated, this means a greater number of labels in the OPEN list for any given node.  The two compound together for a significant increase in total labels which are treated during algorithm runtime.  This significantly affects the performance of the {\fontfamily{qcr}\selectfont NODE} selection method.  Advantages of the {\fontfamily{qcr}\selectfont NODE} selection method are outweighed by this increase in total labels generated as connectivity increases. In the {\fontfamily{qcr}\selectfont LABEL} method, while total new labels created when treating a single label increases with the edge connectivity, the total number of labels is still lower than the {\fontfamily{qcr}\selectfont NODE} due to single label selection and treatment. This difference between the two methods in total number of labels produced grows as connectivity grows, as evidenced by increased runtime.

\section{Conclusions and Future Work}\label{sec:conc}
A Noise-Restricted Hybrid-Fuel Shortest Path Problem was presented which  arises in several urban path planning applications. A MILP formulation was given, and it was proven that the problem is NP-hard.  We presented a labeling algorithm  to solve the NRHFSPP, with two different selection methods to treat the labels. We prove that the complexity of the algorithm presented is pseudo-polynomial. Extensive computational experiments were conducted on different types of graph to analyze the algorithm. We present the performance of the algorithm with the two selection methods and show that the {\fontfamily{qcr}\selectfont LABEL} selection method is more robust to increases in graph connectivity.  In certain instances, specifically the 3D lattice problems the {\fontfamily{qcr}\selectfont NODE} selection method performs better. 

This work can be extended in a variety of ways. First is acceleration of the labeling algorithm, improving both the pseudo-polynomial worst-case running time and actual performance through alternate label selection or treatment processes. Alternatively, the single-agent problem can be explored in a continuous framework.  Similarly, a hybrid approach could leverage the discrete solution, presented in this study, as a starting point for the continuous-time paradigm. Further, multi-agent extensions of this problem could also be considered.

\bibliographystyle{IEEEtran}
\bibliography{shorttitles.bib, bibfile.bib}


\end{document}